\def \phi {\varphi}
\def \R {\mathbb{R}}
\def \vf{\varphi}
\def \So {\mathscr{S}(\Rm)}
\newcommand{\Rn}{\mathbb R^n}
\newcommand{\Rm}{\mathbb R^m}
\newcommand{\p}{\partial}
\numberwithin{equation}{section}
\newcommand{\beq}{\begin{equation}}
\newcommand{\bea}[1]{\begin{array}{#1} }
\newcommand{\eeq}{ \end{equation}}
\newcommand{\ea}{ \end{array}}
\newcommand{\I}{\mathscr I_{HL}}
\newcommand{\tr}{\operatorname{tr}}
\newcommand{\sa}{\langle}
\newcommand{\da}{\rangle}
\newtheorem{theorem}{Theorem}[section]
\newtheorem{lemma}[theorem]{Lemma}
\newtheorem{proposition}[theorem]{Proposition}
\numberwithin{equation}{section}
\begin{document}

\title[Schr\"odinger semigroups, etc.]{Schr\"odinger semigroups and the H\"ormander hypoellipticity condition}

\keywords{Schr\"odinger equations with friction.  Dispersive estimates}

\subjclass{35A08, 35H10, 35Q41}

\date{}

\begin{abstract}
We introduce a class of (possibly) degenerate dispersive equations with a drift. We prove that, under the H\"ormander hypoellipticity condition, the relevant Cauchy problem can be uniquely solved in the Schwartz class, and the solution operator can be uniquely extended to a strongly continuous semigroup $\{\mathcal T(t)\}_{t\ge 0}$ in $L^2(\Rm)$. Finally, we prove that for $t>0$ the operator $\mathcal T(t)$ satisfies a sharp form of dispersive estimate in $L^p$, for any $1\le p\le 2$, and an uncertainty principle. 
\end{abstract}

\author{Nicola Garofalo}

\address{Dipartimento d'Ingegneria Civile e Ambientale (DICEA)\\ Universit\`a di Padova\\ Via Marzolo, 9 - 35131 Padova,  Italy}
\vskip 0.2in
\email{nicola.garofalo@unipd.it}

\thanks{N. Garofalo is supported in part by a Progetto SID (Investimento Strategico di Dipartimento): ``Aspects of nonlocal operators via fine properties of heat kernels", University of Padova (2022); and by a PRIN (Progetto di Ricerca di Rilevante Interesse Nazionale) (2022): ``Variational and analytical aspects of geometric PDEs". He is also partially supported by a Visiting Professorship at the Arizona State University}

\author{Alessandra Lunardi}

\address{Dipartimento di Scienze Matematiche, Fisiche e Informatiche
Universit\`a di Parma\\
Parco Area delle Scienze 53/A - 43124 Parma, Italy}
\vskip 0.2in
\email{alessandra.lunardi@unipr.it}

\maketitle


\section{Introduction}\label{S:intro}

In the present work we are interested in the Cauchy problem 
in $\Rm\times (0,\infty)$ for the following Schr\"odinger equation with friction
\begin{equation}\label{A}
\begin{cases}
\p_t f- i \operatorname{tr}(Q \nabla^2 f) -  \langle B x,\nabla f\rangle  = 0,
\\
f(x,0) = \vf(x),\ \ \ \ \ \vf\in \mathscr S(\Rm),
\end{cases}
\end{equation} 
where as usual $i^2 = - 1$. Here, for $m\in \mathbb N$, we let  $Q, B\in \mathbb M_{m\times m}(\R)$, with $Q = Q^\star \ge 0$. Since the matrix $Q$ can be highly singular, solving \eqref{A} in any reasonable functional space is hopeless in general. We make the key assumption that for at least one $t>0$ the \emph{covariance matrix} $Q(t)$,  generated by $Q$ and $B$, satisfy the  condition introduced by H\"ormander in his fundamental paper \cite{Ho}
\begin{equation}\label{Ds}
Q(t) = \int_0^t e^{sB}Q e^{sB^\star} ds > 0.
\end{equation}
We mention that \eqref{Ds} is equivalent to the Kalman rank condition, see \cite[Theorem 1.2, p. 17]{Z}. It is easy to see that if \eqref{Ds} holds for one $t$, it must be true for all $t>0$.

Whereas no hypoellipticity can be expected for \eqref{A} even if $Q>0$ and $B = O_m$, our objective is to prove that, under the assumption \eqref{Ds}, one can uniquely solve the Cauchy problem and the relevant solution operator $\mathcal T(t) \vf(x) = f(x,t)$ admits the following representation on $\So$ 
\begin{equation}\label{f}
\mathcal T(t) \vf(x)  = \begin{cases}
(4\pi
)^{-\frac{m}{2}} \frac{e^{-\frac{i \pi m}4}}{\sqrt{\det Q(t)}}   \int_{\Rm} e^{i \frac{\sa Q(t)^{-1}(y-e^{tB}x),y-e^{tB}x\da}{4}} \vf(y) dy,\ \ \ \ t>0,
\\
\\
\vf(x),\ \ \ \ \ \ \ \ \ \ \  t = 0,
\end{cases}
\end{equation}
see Proposition \ref{P:erwin}. We note that \eqref{A} is invariant with respect to the non-Abelian group law
$(x,s)\circ (y,t) = (y+ e^{-tB}x,s+t)$, and this is reflected in \eqref{f}.
We then show that $\{\mathcal T(t)\}_{t\ge 0}$ is a semigroup in $\So$ which can be uniquely extended to a strongly continuous one in $L^2(\Rm)$, see Theorem \ref{T:sgL2}. We prove that for any $1\le p \le 2$ and $t>0$, this family of linear operators satisfies a sharp form of dispersive estimate in $L^p(\Rm)$, see Theorem \ref{T:fest}. The latter states that, given $p$ in such range, for $t>0$ we can extend \eqref{f} to a  bounded operator $\mathcal T(t) : L^p(\Rm)\to L^{p'}(\Rm)$ such that for any $\vf\in L^p(\Rm)$ one has
\begin{equation}\label{fe0}
||\mathcal T(t) \vf||_{L^{p'}(\Rm)} \le (4\pi
)^{-\frac{m}{2}+\frac{m}{p'}} \left(\frac{p^{1/p}}{{p'}^{1/p'}}\right)^{\frac m2} \ \frac{e^{-\frac{\tr B}{p'} t}}{(\det Q(t))^{\frac 12 - \frac 1{p'}}}\ ||\vf||_{L^p(\Rm)},
\end{equation} 
where as usual, $\frac 1p + \frac 1{p'} = 1$. Finally, exploiting \eqref{f} and a classical theorem of Hardy, we prove the following form of uncertainty principle, see Theorem \ref{T:hardy}. Let $\vf\in L^2(\Rm)$ and $f(x,t)= \mathcal T(t) \vf(x)$ satisfy for some $A, a, b, s>0$ and a.e. $x\in \Rm$
\begin{equation}\label{upperG0}
|f(x,0)| \le A e^{-a |x|^2},\ \ \ \ \ |f(x,s)| \le A e^{-b |K(s)^{-1} x|^2}.
\end{equation}
If $s > \frac{\pi}{\sqrt{ab}}$,
then $f(x,t) \equiv 0$ in $\Rm\times (0,\infty)$. In the second inequality in \eqref{upperG0}, for $t>0$ the invertible linear mapping $K(t):\Rm\to \Rm$ is defined by $K(t) x = 4\pi t^{-1} Q(t) e^{-tB} x$.

To put our results in context, we recall that in the opening of his cited work \cite{Ho} H\"ormander considered the following class of evolution equations in $\Rm\times (0,\infty)$ 
\begin{equation}\label{A0}
\p_t u - \operatorname{tr}(Q \nabla^2 u) -
\langle Bx,\nabla u\rangle   = 0,
\end{equation}
and proved that \eqref{A0} is hypoelliptic if and
only if \eqref{Ds} holds. We recall that if for given $A, B \in \mathbb M_{m\times m}(\R)$ we let $Q = \frac 12 A A^\star$, then \eqref{A0} is the Kolmogorov equation of the stochastic differential equation
\[
d X_t = B X_t dt + A\ d W_t,
\]
where $W_t$ is a standard Brownian motion in $\Rm$.

The work of H\"ormander also put in a broader perspective an interesting result from thirty years earlier that, up to that point, had remained isolated, see \cite{Str}. 
In his 1934 note \cite{Kol} on Brownian motion and the kinetic theory of gases Kolmogorov considered the following special case of \eqref{A0} in $\Rm\times (0,\infty)$, with $m = 2n$ and spatial variables $v, x\in \Rn$, 
\begin{equation}\label{K}
\p_t u- \Delta_v u - \sa v,\nabla_x u\da = 0.
\end{equation}
Despite the highly degenerate nature of this PDE (note the missing diffusive term $\Delta_x u$), Kolmogorov constructed an explicit fundamental solution $C^\infty$ outside the diagonal, thus proving the hypoellipticity of his equation. Since for \eqref{K} we have $Q = \begin{pmatrix} I_n & O_n\\ O_n & O_n\end{pmatrix}$ and $B = \begin{pmatrix} O_n & O_n\\I_n & O_n\end{pmatrix}$, a computation gives  
\begin{equation}\label{QK}
Q(t)= \begin{pmatrix} tI_n & \frac{t^2}2 I_n\\ \frac{t^2}2 I_n & \frac{t^3}3 I_n\end{pmatrix}\  \Longrightarrow\ \det Q(t) = c(n) t^{4n} = c(m) t^{2m}>0,
\end{equation}
which in view of \eqref{Ds} shows that \eqref{K} nicely fits the general framework of \cite{Ho}.
But in the context of the Cauchy problem \eqref{A}, the expression of $\det Q(t)$ in \eqref{QK} is telling us something interesting. Since  $\tr B = 0$, inserting such term in our dispersive estimate \eqref{fe0}, we presently obtain the following sharp behaviour
\begin{equation}\label{KS}
||\mathcal T(t) \vf||_{L^{p'}(\Rm)} \le \frac{C(m,p)\ }{t^{2m(\frac 12 - \frac 1{p'})}} ||\vf||_{L^p(\Rm)}
\end{equation}
for the solution $f(x,t) = \mathcal T(t) \vf(x)$ of the degenerate Schr\"odinger equation with friction in $\Rm\times (0,\infty)$
\begin{equation}\label{Skol}
\p_t f- i \Delta_v f - \sa v,\nabla_x f\da = 0.
\end{equation}
Comparing \eqref{KS} with the well-known dispersive estimate satisfied by the free Schr\"odinger semigroup $\mathcal T_0(t) \vf = e^{i t \Delta} \vf$ in $\Rm$
\begin{equation}\label{class}
||\mathcal T_0(t) \vf||_{L^{p'}(\Rm)} \le \frac{C(m,p)\ }{t^{m(\frac 12 - \frac 1{p'})}} ||\vf||_{L^p(\Rm)},
\end{equation}
see e.g. \cite[Lemma 1.2]{GV} or \cite[Proposition 2.2.3]{Caze},
we see that \eqref{Skol} displays a dispersion \emph{twice} as fast as \eqref{class}. Such faster decay as $t\to \infty$ reflects the missing term $i \Delta_x u$ of order $2$ in \eqref{Skol}. 

In a completely different direction, when $Q = I_m$ and $B = - I_m$, then the PDE in \eqref{A} is  the Schr\"odinger equation with drift
\begin{equation}\label{OU}
\p_t u - i \Delta u + \sa x,\nabla u\da = 0,
\end{equation}
whose positivity-preserving counterpart is  
the Ornstein-Uhlenbeck equation $\p_t u - \Delta u +
\langle x,\nabla u\rangle  = 0$ introduced in the famous paper \cite{OU}. In this case a computation gives
\[
Q(t) = \frac{1-e^{-2t}}{2} I_m,\ \Longrightarrow\ \det Q(t) = c_m (1-e^{-2t})^m > 0.
\]
Since $\tr B = - m$, we now obtain from \eqref{fe0}
\begin{equation}\label{feOU}
||\mathcal T(t) \vf||_{L^{p'}(\Rm)} \le C(m,p)\ \frac{e^{\frac{m}{p'} t}}{(1-e^{-2t})^{m(\frac 12 - \frac 1{p'})}} ||\vf||_{L^p(\Rm)}.
\end{equation}
Note that in such case the operator norm of $\mathcal T(t)$ is controlled by a function $C(t) \cong e^{\frac{m}{p'} t}$ which blows up exponentially, instead of decaying, as $t\to \infty$. 

We mention for the interested reader that a detailed analysis of the volume function $\det Q(t)$ was carried in \cite{G&liarMA} where it was shown that, in the regime $\operatorname{tr} B\ge 0$, it blows up at least quadratically at infinity. This is sharp since for the Kramers equation in $\R^2_{(v,x)}\times (0,\infty)$ (see \cite{Bri})
\[
\p_t u - \p_{vv} u - v\p_x u + x\p_v u = 0,
\]
for which $Q = \begin{pmatrix} 1 & 0\\ 0 & 0\end{pmatrix}$, $B = \begin{pmatrix} 0 & -1\\ 1 & 0\end{pmatrix}$, and thus $\tr B = 0$, one verifies that 
\[
\det Q(t) = \pi^2(\frac{t^2}4 +\frac 18(\cos(2t) -1))>0.
\]
Therefore \eqref{Ds} holds and $\det Q(t) \cong t^2$ as $t\to \infty$.
A further analysis  which also incorporates the regime $\tr B<0$ is contained in \cite[Proposition 2.3]{BG&liar}.


\section{The Cauchy problem}\label{S:erwin}

The  objective of this section is to establish the representation formula \eqref{f} for the solution of the Cauchy problem \eqref{A}. This will be done in Proposition \ref{P:erwin} below. After proving it, we establish some further basic properties in the space $\So$ of the one-parameter family of linear operators defined by \eqref{f}. 

We begin with a key preparatory lemma that allows to eliminate the drift from \eqref{A}. We stress that Lemma \ref{L:drift} is a simple exercise on the chain rule, and that the H\"ormander condition \eqref{Ds} plays no role whatsoever. 
We mention that an alternative approach would be to use partial Fourier transform to reduce \eqref{A} to the transport equation \eqref{L} below for the function $\hat f$. After applying the method of characteristics to solve such equation, one obtains a solution formula for $\hat f$ which needs to be further manipulated to arrive at \eqref{eccola}. Lemma \ref{L:drift} significantly simplifies this approach by completely bypassing these steps. 

\begin{lemma}\label{L:drift}
Let $Q, B\in \mathbb M_{m\times m}(\R)$, and suppose that $v$ and $f$ are connected by the relation
\begin{equation}\label{drift}
v(x,t) = f(e^{- t B} x,t).
\end{equation}
Then, $f$ is a solution of the Cauchy problem \eqref{A} if and only if $v$ solves the problem
\begin{equation}\label{Abis}
\begin{cases}
\p_t v - i \operatorname{tr}(Q'(t) \nabla^2 v) = 0,
\\
v(x,0) = \vf(x),
\end{cases}
\end{equation} 
where $Q(t)$ is the matrix defined in \eqref{Ds}.
\end{lemma}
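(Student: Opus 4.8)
The plan is to verify \eqref{Abis} by a direct chain-rule computation based on the fact that, for each fixed $t$, the map $x\mapsto e^{-tB}x$ is a linear diffeomorphism, so that \eqref{drift} is a genuine change of spatial variable carried along the flow of $B$. Writing $y = e^{-tB}x$, so that $v(x,t) = f(y,t)$, I would first differentiate in $t$. Since $\partial_t y = -Be^{-tB}x = -By$, the chain rule yields
\[
\partial_t v(x,t) = \partial_t f(y,t) - \langle By,\nabla_y f(y,t)\rangle .
\]
This is precisely where the drift term originates: the very substitution that transports the spatial coordinate along $e^{-tB}$ manufactures, upon differentiating in time, exactly the first-order term appearing in \eqref{A}.

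Next I would compute the spatial derivatives. Because $v(\cdot,t)=f(e^{-tB}\cdot\,,t)$ is a linear substitution with Jacobian $e^{-tB}$, one gets $\nabla_x v = e^{-tB^\star}\nabla_y f$ and, for the Hessian, $\nabla_x^2 v = e^{-tB^\star}(\nabla_y^2 f)\,e^{-tB}$. Inverting gives $\nabla_y^2 f = e^{tB^\star}(\nabla_x^2 v)\,e^{tB}$. Inserting this into the second-order term of \eqref{A}, and using the cyclic invariance of the trace together with the identity $Q'(t)=e^{tB}Q\,e^{tB^\star}$ (immediate upon differentiating \eqref{Ds}), I obtain
\[
\operatorname{tr}(Q\,\nabla_y^2 f) = \operatorname{tr}\big(Q\,e^{tB^\star}(\nabla_x^2 v)\,e^{tB}\big) = \operatorname{tr}\big(e^{tB}Q\,e^{tB^\star}\,\nabla_x^2 v\big) = \operatorname{tr}(Q'(t)\,\nabla_x^2 v).
\]

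Combining the two computations, equation \eqref{A} for $f$, written at $y=e^{-tB}x$ as $\partial_t f - i\operatorname{tr}(Q\nabla_y^2 f) - \langle By,\nabla_y f\rangle = 0$, becomes after substitution $\partial_t v + \langle By,\nabla_y f\rangle - i\operatorname{tr}(Q'(t)\nabla_x^2 v) - \langle By,\nabla_y f\rangle = 0$; the two drift terms cancel and we are left with exactly the equation in \eqref{Abis}, while the initial condition matches since $v(x,0)=f(x,0)=\varphi(x)$. Since \eqref{drift} is at each fixed $t$ an invertible change of variable (with inverse $f(y,t)=v(e^{tB}y,t)$), every identity above is reversible, so the argument establishes the stated equivalence in both directions simultaneously. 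I do not anticipate a genuine obstacle here, consistently with the authors' remark that this is an exercise on the chain rule; the only points requiring a little care are the correct placement of the adjoints in the Hessian transformation and the cyclic-trace manipulation that converts $Q$ into the covariance derivative $Q'(t)$.
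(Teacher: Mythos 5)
Your proof is correct and follows essentially the same route as the paper's: differentiate \eqref{drift} in time so that the drift term cancels, then transform the Hessian under the linear substitution and use $Q'(t)=e^{tB}Qe^{tB^\star}$ to obtain \eqref{Abis}. The only cosmetic difference is that you phrase the Hessian step in matrix form with cyclic invariance of the trace, where the paper writes the same computation in index notation.
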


\begin{proof}
Let $f$ be a solution to the PDE in \eqref{A}, and $v$ be defined as in \eqref{drift}. 
 It is clear from the chain rule that
\[
\p_t  v(x,t) = -   \langle B e^{-  t B}x,\nabla f(e^{-  t B} x,t)\rangle + \p_t  f(e^{- t B} x,t).
\]
On the other hand, we have from \eqref{A}
\[
\p_t  f(e^{- t B} x,t)=  i \operatorname{tr}(Q \nabla^2 f(e^{- t B}x,t)) +  \langle B e^{- t B} x,\nabla f(e^{- t B}x,t)\rangle
\] 
Substituting in the above, the term with the drift disappears and we find
\begin{equation}\label{nice}
\p_t  v(x,t) = i \operatorname{tr}(Q \nabla^2 f(e^{- t B}x,t)).
\end{equation}
If now $T = [t_{ij}] \in \mathbb M_{m\times m}(\mathbb R)$,  then the chain rule gives 
\[
\frac{\p^2 f}{\p x_i\p x_j}(e^{-tB}x,t)   = \sum_{h,k=1}^m t_{hi} t_{kj} \frac{\p^2 v}{\partial y_h\p y_k}(Te^{-tB}x,t).
\] 
If we apply this identity with $T = e^{tB}$, we find
\[
\frac{\p^2 f}{\p x_i\p x_j}(e^{-tB}x,t)   = \sum_{h,k=1}^m t_{hi} t_{kj} \frac{\p^2 v}{\partial y_h\p y_k}(x,t),
\] 
and therefore
\begin{align*}
& \operatorname{tr}(Q \nabla^2 f(e^{- t B}x,t)) = \sum_{h,k=1}^m d_{hk} \frac{\p^2 v}{\partial y_h\p y_k}(x,t),
\end{align*}
where 
\[
d_{h,k} = \sum_{i,j=1}^m q_{ij}t_{hi} t_{kj} = (TQT^\star)_{h,k}.
\]
Since by \eqref{Ds} we have 
\[
TQT^\star = e^{tB} Q e^{tB^\star} = Q'(t),
\]
we finally obtain
\begin{equation}\label{nicer}
\operatorname{tr}(Q \nabla^2 f(e^{- t B}x,t)) = \operatorname{tr}(Q'(t) \nabla^2 v(x,t)).
\end{equation}
Substituting \eqref{nicer} into \eqref{nice}, we reach the conclusion that the function $v(x,t) = f(e^{-tB}x,t)$ satisfies the Cauchy problem \eqref{Abis}.

\end{proof}

As we have mentioned above, the condition \eqref{Ds} plays no role in the proof of Lemma \ref{L:drift}, but it becomes important when one tries to solve the Cauchy problem \eqref{Abis}, see \eqref{imHor} below. This aspect is connected to the following basic result concerning the Fourier transform of complex Gaussian functions for which we refer the reader to e.g. \cite[Theorem 7.6.1]{Hobook}. The definition used in this paper is the following
\[
\mathscr F f(\xi) = \hat f(\xi) = \int_{\Rm} e^{-2\pi i\sa \xi,x\da} f(x) dx.
\]

\begin{proposition}\label{P:ftgaussian}
Let $A\in G\ell(\mathbb C,m)$ be such that $A^\star = A$ and $\Re A \ge 0$. Then 
\begin{equation}\label{gengaussi2}
\mathscr F\left(\frac{1}{\sqrt{\operatorname{det} A}}(4\pi
)^{-\frac{m}{2}} e^{- \frac{\sa A^{-1}\cdot,\cdot\da}{4}}\right)(\xi) =
e^{- 4 \pi^2  \sa A\xi,\xi\da},
\end{equation}
where $\sqrt{\operatorname{det} A}$ is the unique analytic branch such that $\sqrt{\operatorname{det} A}>0$ when $A$ is real. In particular, when $A = i A_0$, with $A_0$ real, symmetric and $>0$, then \eqref{gengaussi2} gives
\begin{equation}\label{imgaussian}
\mathscr F\left(\frac{e^{-\frac{i m\pi}4}}{\sqrt{\operatorname{det} A_0}}(4\pi
)^{-\frac{m}{2}} e^{\frac{i \sa A_0^{-1}\cdot,\cdot\da}{4}}\right)(\xi) =
e^{- 4 \pi^2 i  \sa A_0\xi,\xi\da}.
\end{equation}
\end{proposition}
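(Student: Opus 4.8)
The plan is to establish \eqref{gengaussi2} first for \emph{real} symmetric positive definite $A$, then to promote it to all complex symmetric $A$ with $\Re A > 0$ by analytic continuation in the entries of $A$, and finally to reach the boundary case $\Re A \ge 0$ (in particular the purely imaginary $A = iA_0$ of \eqref{imgaussian}) by the regularization $A \rightsquigarrow A + \ve I$ together with a limiting argument in $\mathscr S'(\Rm)$. The identity \eqref{imgaussian} will then drop out by specializing $A = iA_0$ and tracking the chosen branch of $\sqrt{\det A}$.

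First I would treat real $A>0$. Diagonalizing $A = O D O^\star$ with $O$ orthogonal and $D = \operatorname{diag}(\lambda_1,\dots,\lambda_m)$, $\lambda_j>0$, and changing variables $x \mapsto Ox$, the integral factors into a product of one–dimensional Gaussian transforms. Using the elementary identity $\int_{\R} e^{-2\pi i \xi t}\, e^{-t^2/(4\lambda)}\,dt = 2\sqrt{\pi \lambda}\,e^{-4\pi^2 \lambda \xi^2}$, multiplying over $j=1,\dots,m$ so that $\prod_j 2\sqrt{\pi\lambda_j} = (4\pi)^{m/2}\sqrt{\det A}$, and reinstating the prefactor $(4\pi)^{-m/2}/\sqrt{\det A}$, one obtains exactly $e^{-4\pi^2\sa A\xi,\xi\da}$. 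This is the routine part.

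Next comes the analytic continuation. On the domain $\mathcal D = \{A : A^\star = A,\ \Re A > 0\}$ — which is convex, hence connected and simply connected — the matrix $A$ is invertible, and writing $A = P + iN$ with $P>0$ and $N$ real symmetric, a short computation solving $A^{-1}=X+iY$ (namely $Y=-P^{-1}NX$ and $(P+NP^{-1}N)X=I$) gives $\Re(A^{-1}) = (P + N P^{-1} N)^{-1} > 0$. Thus the Gaussian decays, and if $g_A$ denotes the complex Gaussian $\frac{1}{\sqrt{\det A}}(4\pi)^{-m/2} e^{-\sa A^{-1}\cdot,\cdot\da/4}$ transformed in \eqref{gengaussi2}, then $A \mapsto \mathscr F g_A(\xi)$ is well defined; differentiating under the integral sign (with locally uniform domination on $\mathcal D$) shows it is holomorphic in the entries of $A$. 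Since $\det A \ne 0$ on the simply connected $\mathcal D$, the branch of $\sqrt{\det A}$ normalized to be positive on real $A$ is holomorphic there, so the entire left-hand side is holomorphic, while $e^{-4\pi^2\sa A\xi,\xi\da}$ is entire in $A$. These two holomorphic functions agree on the real positive definite cone, an open subset of the real points of $\mathcal D$ and hence a uniqueness set; by the identity theorem for several complex variables they coincide throughout $\mathcal D$, proving \eqref{gengaussi2} for $\Re A>0$.

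Finally I would pass to the boundary. For $A^\star = A$ with $\Re A \ge 0$ and $\det A \ne 0$, set $A_\ve = A + \ve I$, so that $\Re A_\ve > 0$ and \eqref{gengaussi2} holds for $A_\ve$. Here lies the main obstacle: when $\Re A$ is only semidefinite (e.g. $A = iA_0$) the Gaussian $g_A$ is purely oscillatory and is not in $L^1(\Rm)$, so the left member must be read in the tempered-distribution sense. The remedy is that $\Re(A_\ve^{-1})\ge 0$ forces $|g_{A_\ve}(x)|\le (4\pi)^{-m/2}/|\sqrt{\det A_\ve}|$ uniformly in $x$ and in small $\ve$; testing against any $\phi\in\So$, dominated convergence gives $g_{A_\ve}\to g_A$ in $\mathscr S'(\Rm)$, and continuity of $\mathscr F$ on $\mathscr S'(\Rm)$ yields $\mathscr F g_{A_\ve}\to \mathscr F g_A$, while the explicit right-hand sides $e^{-4\pi^2\sa A_\ve\xi,\xi\da}$ converge boundedly, hence in $\mathscr S'(\Rm)$, to $e^{-4\pi^2\sa A\xi,\xi\da}$. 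Uniqueness of limits in $\mathscr S'(\Rm)$ delivers \eqref{gengaussi2} on the whole boundary. Specializing $A=iA_0$ and continuing $\sqrt{\det A}$ along the arc $e^{i\theta}A_0$, $\theta\in[0,\tfrac\pi2]$ (on which $\Re(e^{i\theta}A_0)=\cos\theta\,A_0\ge 0$), produces the value $e^{im\pi/4}\sqrt{\det A_0}$ and hence the prefactor $e^{-im\pi/4}$, which together with $A^{-1}=-iA_0^{-1}$ gives precisely \eqref{imgaussian}.
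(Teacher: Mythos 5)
Your proof is correct, but note that the paper itself contains no proof of this proposition: it is quoted verbatim from H\"ormander \cite[Theorem 7.6.1]{Hobook}, and your argument is in essence the classical one given there. The three stages you propose all hold up: the diagonalization computation for real $A>0$ (with the constant $\prod_j 2\sqrt{\pi\lambda_j}=(4\pi)^{m/2}\sqrt{\det A}$ coming out right); the analytic continuation to the convex, hence simply connected, domain $\{\Re A>0\}$, where your identity $\Re(A^{-1})=(P+NP^{-1}N)^{-1}>0$ guarantees Gaussian decay and the identity theorem applies because the real positive definite cone is a maximal totally real slice; and the passage to the boundary via $A_\ve=A+\ve I$, where the uniform bound $|g_{A_\ve}|\le (4\pi)^{-m/2}|\det A_\ve|^{-1/2}$ (valid since $\det A\ne 0$ keeps $|\det A_\ve|$ bounded away from zero) and weak-$\ast$ continuity of $\mathscr F$ on $\mathscr S'(\mathbb R^m)$ close the argument. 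The branch bookkeeping along the arc $e^{i\theta}A_0$, $\theta\in[0,\tfrac{\pi}{2}]$, producing the phase $e^{-\frac{im\pi}{4}}$ and $A^{-1}=-iA_0^{-1}$, gives \eqref{imgaussian} exactly. Two small points worth making explicit if this were to be written out: the hypothesis $A^\star=A$ must be read as complex symmetry ($A^{T}=A$), since $A=iA_0$ is not Hermitian; and on the boundary $\Re A\ge 0$ the left-hand side of \eqref{gengaussi2} is not in $L^1(\mathbb R^m)$, so the identity is to be understood in $\mathscr S'(\mathbb R^m)$ --- which is precisely how the paper uses it in the proof of Proposition \ref{P:erwin}, where it is multiplied against $\hat\vf\in\mathscr S(\mathbb R^m)$. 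Both points are implicitly handled in your write-up.
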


We also recall the following classical fact, see \cite[Section 3.4]{Hobook}.

\begin{lemma}\label{L:complexG}
Let $A\in \mathbb M_{m\times m}(\mathbb R)$ be such that $A^\star = A$ and $A > 0$. Then, as a generalised Riemann integral, 
\begin{equation}\label{iG}
\int_{\Rm} e^{i\langle A x,x\rangle} dx =
\frac{e^{i \frac m4} \pi^{\frac{m}{2}}}{\sqrt{\det A}}.
\end{equation}
\end{lemma}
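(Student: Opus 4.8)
The plan is to evaluate the Gaussian integral $\int_{\Rm} e^{i\langle Ax,x\rangle}\,dx$ by first reducing to a product of one-dimensional integrals and then invoking the classical Fresnel integral. Since $A$ is real, symmetric, and positive definite, I would diagonalize it: write $A = O^\star D O$ with $O$ orthogonal and $D = \operatorname{diag}(\lambda_1,\dots,\lambda_m)$, where each $\lambda_j>0$. Performing the orthogonal change of variables $y = Ox$ (whose Jacobian has absolute value $1$ since $\det O = \pm 1$) turns the quadratic form $\langle Ax,x\rangle$ into $\sum_{j=1}^m \lambda_j y_j^2$, so the integral factors as $\prod_{j=1}^m \int_{\R} e^{i\lambda_j y_j^2}\,dy_j$.

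The heart of the matter is then the single one-dimensional Fresnel integral $\int_{\R} e^{i\lambda y^2}\,dy$ for $\lambda>0$, understood as a generalized (improper) Riemann integral. The standard evaluation gives $\int_{\R} e^{i\lambda y^2}\,dy = e^{i\pi/4}\sqrt{\pi/\lambda}$. I would justify this by the usual contour-rotation argument: integrate $e^{iz^2}$ over the boundary of the circular sector of radius $R$ between the angles $0$ and $\pi/4$, let $R\to\infty$ using Jordan-type estimates to kill the arc contribution, and compare the segment along the ray $\arg z = \pi/4$ (where $z = re^{i\pi/4}$ makes $iz^2 = -r^2$, producing the real Gaussian $\int_0^\infty e^{-r^2}\,dr = \sqrt\pi/2$) with the segment along the real axis. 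A rescaling $y \mapsto y/\sqrt\lambda$ accounts for the factor $\lambda^{-1/2}$.

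Multiplying the $m$ one-dimensional factors together yields $\prod_{j=1}^m e^{i\pi/4}\sqrt{\pi/\lambda_j} = e^{im\pi/4}\pi^{m/2}\big(\prod_{j=1}^m \lambda_j\big)^{-1/2}$. Since $\prod_{j=1}^m \lambda_j = \det D = \det A$, and the positivity of $A$ makes $\sqrt{\det A}$ unambiguous, this produces exactly the right-hand side of \eqref{iG}. I note that the exponent in \eqref{iG} as written reads $e^{i\frac m4}$, which I take to be shorthand for $e^{i\frac{m\pi}4}$ consistent with the $m$-fold product of the one-dimensional phases $e^{i\pi/4}$; the proof delivers the latter.

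The main obstacle is conceptual rather than computational: the integrand $e^{i\langle Ax,x\rangle}$ has modulus $1$ everywhere, so the integral does \emph{not} converge absolutely and must be interpreted as an improper (oscillatory) integral. Consequently every interchange of limits — the factorization into one-dimensional integrals, the passage to the limit in the Fresnel contour argument, and the control of the arc — must be handled with care, either by the contour method above or by inserting a regularizing factor $e^{-\epsilon|x|^2}$, applying the absolutely convergent complex Gaussian formula (the analytic continuation underlying Proposition \ref{P:ftgaussian}), and then letting $\epsilon \to 0^+$. The regularization route is cleaner: it lets me quote the formula $\int_{\Rm} e^{-\langle (\epsilon I - iA)x,x\rangle}\,dx = \pi^{m/2}/\sqrt{\det(\epsilon I - iA)}$ with the principal analytic branch of the square root, and the only delicate point becomes identifying $\lim_{\epsilon\to 0^+}\det(\epsilon I - iA)^{-1/2}$ as $e^{im\pi/4}(\det A)^{-1/2}$ by tracking the argument of each eigenvalue $\epsilon - i\lambda_j$ as it rotates to $-i\lambda_j$.
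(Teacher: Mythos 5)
Your argument is correct, but note first that the paper itself contains no proof of Lemma \ref{L:complexG}: it is recalled as a classical fact with a citation to H\"ormander's book, so you have supplied the argument that the paper delegates to the literature. Both of your routes are sound and standard: the orthogonal diagonalization $A=O^\star DO$ plus the one-dimensional Fresnel integral $\int_{\R}e^{i\lambda y^2}\,dy=e^{i\pi/4}\sqrt{\pi/\lambda}$ (sector contour of opening $\pi/4$), or the regularization $e^{-\epsilon|x|^2}$ with the complex Gaussian formula for $\Re M>0$, $M=\epsilon I-iA$, tracking the argument of each eigenvalue $\epsilon-i\lambda_j$ as $\epsilon\to 0^+$. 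The second route is essentially the analytic-continuation mechanism behind Proposition \ref{P:ftgaussian}; indeed, modulo the same interpretive caveat about non-absolutely convergent integrals, the lemma is formally just \eqref{imgaussian} evaluated at $\xi=0$ with $A_0=(4A)^{-1}$, which is the derivation most consistent with the paper's toolkit. Your reading of the constant is also right: $e^{i\frac m4}$ in \eqref{iG} is a typo for $e^{i\frac{m\pi}4}$, and the latter is the value actually used in the proof of Proposition \ref{P:one}, where the lemma is applied with $A=\tfrac14 I_m$ to give $\int_{\Rm}e^{i|z|^2/4}\,dz=e^{i\frac{m\pi}4}(4\pi)^{m/2}$. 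One refinement to your closing caveat: the issue with non-absolute convergence is not only that interchanges of limits need care, but that the choice of exhaustion matters. For $m\ge 2$ the limit of $\int_{B_R}e^{i\langle Ax,x\rangle}\,dx$ over balls does not exist (for $A=I_2$ it equals $\pi(e^{iR^2}-1)/i$, which oscillates), so ``generalised Riemann integral'' must be understood over boxes, i.e.\ as the iterated improper integrals your factorization actually computes, or in the Abel-regularized sense of your second route; both interpretations yield the stated value, and the same reading is implicitly required for the integrals \eqref{S1} and \eqref{S2} in Proposition \ref{P:one}.
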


We are ready to state and prove formula \eqref{f}. Henceforth in this section and in the rest of the paper the  condition \eqref{Ds} will be in force.

\begin{proposition}\label{P:erwin}
In $\Rm\times \Rm\times (0,\infty)$ consider the kernel
\begin{equation}\label{Skolmo}
\mathcal S(x,y,t) = 
(4\pi
)^{-\frac{m}{2}} \frac{e^{-\frac{i m\pi}4}}{\sqrt{\det Q(t)}}   e^{i \frac{\sa Q(t)^{-1}(y-x),y-x\da}{4}}\ \ \ \ t>0.
\end{equation}
Given $\vf\in \mathscr S(\Rm)$, the function
\begin{equation}\label{reallyfinal}
f(x,t) = 
\int_{\Rm} \mathcal S(e^{t B} x,y,t) \vf(y) dy,\ \ \ \ \ t>0,
\end{equation}
and $f(x,0) = \vf(x)$, 
is the unique solution of the Cauchy problem \eqref{A} such that $f(\cdot,t)\in \mathscr S(\Rm)$ for every $t\ge 0$.
\end{proposition}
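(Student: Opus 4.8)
The plan is to reduce the Cauchy problem \eqref{A} to the drift-free problem \eqref{Abis} via Lemma \ref{L:drift}, solve the latter using the Fourier transform, and then translate the solution back. First I would note that by Lemma \ref{L:drift}, solving \eqref{A} for $f$ is equivalent to solving \eqref{Abis} for $v(x,t) = f(e^{-tB}x,t)$. For the drift-free equation $\p_t v = i \operatorname{tr}(Q'(t)\nabla^2 v)$, taking the Fourier transform in the spatial variable turns the spatial derivatives into multiplication: writing $\hat v(\xi,t) = \mathscr F v(\cdot,t)(\xi)$, the equation becomes
\begin{equation}\label{L}
\p_t \hat v(\xi,t) = -4\pi^2 i \langle Q'(t)\xi,\xi\rangle\, \hat v(\xi,t),
\end{equation}
an ODE in $t$ for each fixed $\xi$. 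Integrating and using $\int_0^t Q'(s)\,ds = Q(t)$ (which holds since $Q(0)=O_m$, from \eqref{Ds}), I obtain $\hat v(\xi,t) = e^{-4\pi^2 i \langle Q(t)\xi,\xi\rangle}\hat\vf(\xi)$.

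The next step is to invert the Fourier transform. By the hypothesis \eqref{Ds}, $Q(t)>0$ for every $t>0$, so I can apply Proposition \ref{P:ftgaussian} with $A_0 = Q(t)$: formula \eqref{imgaussian} identifies $e^{-4\pi^2 i \langle Q(t)\xi,\xi\rangle}$ as the Fourier transform of the explicit complex Gaussian kernel appearing in \eqref{Skolmo}. Hence $\hat v(\xi,t)$ is the product of two Fourier transforms, and $v(\cdot,t)$ is the convolution of $\vf$ with $\mathcal S(\cdot,\cdot,t)$, giving
\begin{equation}\label{eccola}
v(x,t) = \int_{\Rm} \mathcal S(x,y,t)\,\vf(y)\,dy.
\end{equation}
Undoing the substitution $v(x,t) = f(e^{-tB}x,t)$, i.e. replacing $x$ by $e^{tB}x$, yields precisely \eqref{reallyfinal}. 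I would then verify the initial condition, checking that $f(\cdot,t)\to\vf$ as $t\to 0^+$ in $\mathscr S(\Rm)$; this follows because $\hat v(\xi,t)\to\hat\vf(\xi)$ with the Gaussian factor tending to $1$, and one controls the limit in the Schwartz topology. That $f(\cdot,t)\in\mathscr S(\Rm)$ for each $t>0$ is seen on the Fourier side, where $\hat v(\cdot,t)$ is the product of the Schwartz function $\hat\vf$ with a smooth unimodular multiplier whose derivatives grow polynomially, keeping the product Schwartz; the invertible linear change $x\mapsto e^{tB}x$ preserves $\mathscr S(\Rm)$.

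Finally I would establish uniqueness. Given any solution $f$ of \eqref{A} with $f(\cdot,t)\in\mathscr S(\Rm)$, Lemma \ref{L:drift} produces a solution $v$ of \eqref{Abis} in the same class, and the Fourier transform converts \eqref{Abis} into the pointwise ODE \eqref{L} with initial datum $\hat\vf$; since this ODE has a unique solution, $\hat v$, and hence $v$ and $f$, are uniquely determined. The main obstacle I anticipate is the rigorous justification of the Fourier-inversion step: the kernel $\mathcal S$ is a purely oscillatory complex Gaussian (the matrix $A = iQ(t)$ has $\Re A = 0$, sitting on the boundary of the region where Proposition \ref{P:ftgaussian} applies), so the integral defining $f$ is only a conditionally convergent oscillatory integral rather than an absolutely convergent one, and \eqref{imgaussian} must be read in the sense of tempered distributions or via an $\e$-regularization $A = (\e + i)Q(t)$ followed by a limit. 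Keeping the manipulations valid in $\mathscr S'(\Rm)$, and confirming that the distributional convolution coincides with the pointwise integral \eqref{reallyfinal} for $\vf\in\mathscr S(\Rm)$, is the delicate point; the H\"ormander condition \eqref{Ds} is exactly what guarantees $Q(t)>0$ so that this Gaussian analysis can be carried out at all.
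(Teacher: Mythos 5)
Your proposal follows essentially the same route as the paper's own proof: Lemma \ref{L:drift} to eliminate the drift, the Fourier transform to reduce \eqref{Abis} to an ODE in $t$, Proposition \ref{P:ftgaussian} with $A_0 = Q(t)>0$ (guaranteed by \eqref{Ds}) to invert, and uniqueness plus the Schwartz-class and initial-condition claims read off from the Fourier-side representation. One small correction to your anticipated obstacle: since $|\mathcal S(x,y,t)|$ is constant in $(x,y)$ and $\vf\in\mathscr S(\Rm)$, the integral \eqref{reallyfinal} is absolutely convergent (the oscillatory-integral subtlety lives entirely inside \eqref{imgaussian}, which the cited H\"ormander theorem already handles), so no extra regularization is needed at that step.
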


\begin{proof}
Given $\vf\in \So$, let $f$ solve \eqref{A}. According to Lemma \ref{L:drift}, the function $v$ defined by \eqref{drift} solves the problem \eqref{Abis}. To obtain a representation of $v$, we take a partial Fourier transform with respect to $x\in \Rm$,
\[
\hat v(\xi,t) = \int_{\Rm} e^{-2\pi i\sa \xi,x\da} v(x,t) dx.
\]
The problem \eqref{Abis} is thus changed into
\begin{equation}\label{AbisFT}
\begin{cases}
\p_t \hat v + 4\pi^2 i \sa Q'(t) \xi,\xi\da \hat v = \p_t \hat v + 4\pi^2 i \frac{d}{dt} \sa Q(t) \xi,\xi\da \hat v = 0,
\\
\hat v(\xi,0) = \hat \vf(\xi).
\end{cases}
\end{equation} 
We obtain from \eqref{AbisFT}
\begin{equation}\label{hatv}
\hat v(\xi,t) = \hat \vf(\xi) e^{-4\pi^2 i \sa Q(t) \xi,\xi\da},
\end{equation}
where in the second equality we have used the definition (but not the positivity, yet) of the covariance matrix $Q(t)$ in \eqref{Ds}. To proceed, we next want to show that, for every $t>0$, the function $\xi\to e^{-4\pi^2 i  \sa Q(t) \xi,\xi\da}$ is a Fourier transform. 
For this we use \eqref{imgaussian}  in Proposition \ref{P:ftgaussian} with $A_0 = Q(t)>0$, obtaining
\begin{equation}\label{imHor}
\mathscr F\left(\frac{e^{-\frac{i m\pi}4}}{\sqrt{\det{Q(t)}}}(4\pi
)^{-\frac{m}{2}} e^{i \frac{\sa Q(t)^{-1}\cdot,\cdot\da}{4}}\right)(\xi) =
e^{- 4 \pi^2 i \sa Q(t)\xi,\xi\da}.
\end{equation}
From \eqref{hatv} and \eqref{imHor} we obtain for $t>0$ the following representation for the solution of the Cauchy problem \eqref{Abis} 
\begin{equation}\label{tildeTt}
v(x,t)  = (4\pi
)^{-\frac{m}{2}} \frac{e^{-\frac{i m\pi}4}}{\sqrt{\det Q(t)}} \int_{\Rm}  e^{i \frac{\sa Q(t)^{-1}(x-y),x-y\da}{4}} \vf(y) dy = \int_{\Rm} \mathcal S(x,y,t) \vf(y) dy,
\end{equation}
where in the second equality we have used \eqref{Skolmo}.
Combining \eqref{drift} with \eqref{tildeTt}, we finally obtain \eqref{f}, or equivalently \eqref{reallyfinal}. 

To complete the proof, and also for later purposes, we prove a representation of the spatial Fourier transform of $f(x,t) = \mathcal T(t) \vf(x)$, were $\mathcal T(t)$ is defined by \eqref{f}. With this in mind, we recall the following well-known property. If $g\in L^1(\Rm)$ and $A\in G\ell(\R,m)$, then one has
\begin{equation}\label{ftA}
\hat g(A^\star \xi) = |\operatorname{det} A|^{-1} \widehat{g \circ A^{-1}}(\xi).
\end{equation} 
Applying \eqref{ftA} with $A = e^{-tB}$, and keeping in mind that $\det A = e^{-t \tr B}$, we obtain
\[
\mathscr F(v(e^{t B} \cdot,t))(\xi) = e^{-t \tr B} \hat v(e^{-tB^\star}\xi).
\]
From this identity and from  \eqref{f}, we obtain the following basic representation formula
\begin{equation}\label{eccola}
\widehat{f(\cdot,t)}(\xi) = e^{-t \tr B} \hat \vf(e^{-tB^\star}\xi) \  e^{-4\pi^2 i  \sa Q(t) e^{-tB^\star}\xi,e^{-tB^\star}\xi\da}.
\end{equation}
Since it is clear that 
\[
\xi\ \longrightarrow\ \hat \vf(e^{-tB^\star}\xi) \in \mathscr S(\Rm),
\]
and one easily verifies that the $C^\infty$ function 
\[
\xi\ \longrightarrow\  e^{-4\pi^2 i  \sa Q(t) e^{-tB^\star}\xi,e^{-tB^\star}\xi\da}
\]
is a multiplier for $\mathscr S(\Rm)$, we conclude from \eqref{eccola} that $\widehat{f(\cdot,t)}\in \mathscr S(\Rm)$ for every $t\ge 0$, and therefore $f(\cdot,t) \in \mathscr S(\Rm)$. From these considerations, and from the (almost) obvious consequence $\widehat{f(\cdot,t)}\to \hat \vf$ of \eqref{eccola}, we conclude that 
\[
f(\cdot,t)\ \underset{t\to 0^+}{\longrightarrow}\ \vf.
\]
To complete the proof of Proposition \ref{P:erwin}, we are left with showing that \eqref{f} solves the PDE in \eqref{A}. Since we know that the function $f$ in \eqref{f} satisfies $f(\cdot,t)\in \So$, the fact that it is a solution to the PDE in \eqref{A} follows from tracing back the steps that lead to \eqref{f}. Finally, its uniqueness is again a direct consequence of \eqref{eccola}.

\end{proof}

Given a function $\vf\in \mathscr S(\Rm)$, we henceforth indicate 
\begin{equation}\label{sL}
\mathscr L \vf = i \operatorname{tr}(Q \nabla^2 \vf) +  \langle B x,\nabla \vf\rangle. 
\end{equation}
It is clear that $\mathscr L \vf\in \mathscr S(\Rm)$, or equivalently $\mathscr L(\So)\subset \So$. We explicitly note that we have shown above that \eqref{eccola} implies
\begin{equation}\label{eccolina}
\mathcal T(t) : \So\ \longrightarrow\ \So,
\end{equation}
and that
\begin{equation}\label{dt}
\frac{d}{dt} \mathcal T(t)\vf = \mathscr L \mathcal T(t) \vf.
\end{equation} 
The uniqueness part of Proposition \ref{P:erwin} implies that $\{\mathcal T(t)\}_{t\ge 0}$ is a semigroup of operators on $\So$, but it is of independent interest to also provide a direct proof, and we will do so in Proposition \ref{P:sg} below. The next result provides a basic commutation property for $\{\mathcal T(t)\}_{t\ge 0}$ that will be used more than once in this paper. 

\begin{lemma}\label{L:mapTt}
For any $\vf\in \So$ and $t>0$ we have 
\[
\mathscr L \mathcal T(t) \vf = \mathcal T(t) \mathscr L  \vf.
\]
\end{lemma}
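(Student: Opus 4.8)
The plan is to deduce the identity from the \emph{uniqueness} statement in Proposition \ref{P:erwin}, thereby avoiding any direct computation of how $\mathscr L$ and $\mathcal T(t)$ interact on the Fourier side. Fix $\vf\in\So$ and set $u(x,t)=\mathcal T(t)\vf(x)$. By \eqref{eccolina} and \eqref{dt} we know that $u(\cdot,t)\in\So$ for every $t\ge0$, that $u$ solves the PDE in \eqref{A}, and that $u(\cdot,0)=\vf$.

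First I would introduce the two candidate functions
\[
w(x,t)=\mathscr L\bigl(\mathcal T(t)\vf\bigr)(x)=\bigl(\mathscr L u(\cdot,t)\bigr)(x),\qquad z(x,t)=\mathcal T(t)\bigl(\mathscr L\vf\bigr)(x),
\]
and show that each is an $\So$-valued solution of the Cauchy problem \eqref{A} carrying the \emph{same} Schwartz datum $\mathscr L\vf$. For $z$ this is immediate: since $\mathscr L\vf\in\So$, Proposition \ref{P:erwin} applied with initial datum $\mathscr L\vf$ gives $z(\cdot,t)\in\So$, $\partial_t z=\mathscr L z$, and $z(\cdot,0)=\mathscr L\vf$. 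For $w$, the membership $w(\cdot,t)\in\So$ follows from $\mathscr L(\So)\subset\So$ together with \eqref{eccolina}, while the initial condition $w(\cdot,0)=\mathscr L\vf$ follows from the continuity $u(\cdot,t)\to\vf$ established at the end of the proof of Proposition \ref{P:erwin}.

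The decisive step is to verify that $w$ satisfies the same PDE. Using \eqref{dt} and the fact that $\mathscr L$ is a differential operator whose coefficients do not depend on $t$, I would interchange the time derivative with $\mathscr L$ to obtain
\[
\partial_t w=\partial_t\mathscr L u=\mathscr L\,\partial_t u=\mathscr L(\mathscr L u)=\mathscr L w,
\]
so that $w$ indeed solves $\partial_t w=\mathscr L w$. Once $w$ and $z$ are both identified as $\So$-valued solutions of \eqref{A} with datum $\mathscr L\vf$, the uniqueness assertion of Proposition \ref{P:erwin} forces $w\equiv z$ on $\Rm\times(0,\infty)$, which is precisely $\mathscr L\mathcal T(t)\vf=\mathcal T(t)\mathscr L\vf$.

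The main obstacle is the legitimacy of the interchange $\partial_t\mathscr L u=\mathscr L\partial_t u$, i.e.\ of differentiating the Schwartz-valued map $t\mapsto u(\cdot,t)$ past the spatial operator $\mathscr L$. I would justify this from the explicit spatial Fourier representation \eqref{eccola}: there $\widehat{u(\cdot,t)}(\xi)$ is a smooth function of $(\xi,t)$ with Gaussian-type decay in $\xi$, uniformly on compact $t$-intervals together with all its $\xi$-derivatives, which permits differentiation under the integral and the commutation of $\partial_t$ with the finitely many $\xi$-multiplications and $\xi$-derivatives representing $\mathscr L$ on the Fourier side. This same smoothness also yields the continuity $w(\cdot,t)\to\mathscr L\vf$ as $t\to0^+$ needed for the initial condition. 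Alternatively, one can check the identity directly on the Fourier side via \eqref{eccola} and the Lyapunov relation $\dot Q(t)=BQ(t)+Q(t)B^\star+Q$, but this route is computationally heavier and is exactly what the uniqueness argument is designed to circumvent.
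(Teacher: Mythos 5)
Your proof is correct, but it takes a genuinely different route from the paper's. The paper proves the lemma by direct computation on the Fourier side: using \eqref{L} and \eqref{eccola} it writes out $\widehat{\mathscr L \mathcal T(t)\vf}$ and $\widehat{\mathcal T(t)\mathscr L\vf}$ explicitly and matches them term by term, the crux being the matrix identity \eqref{id}, $e^{-tB}Qe^{-tB^\star} = Q - Be^{-tB}Q(t)e^{-tB^\star} - e^{-tB}Q(t)e^{-tB^\star}B^\star$ --- precisely the Lyapunov-type relation you mention at the end as the ``computationally heavier'' alternative. You instead obtain the commutation abstractly: both $w=\mathscr L\mathcal T(\cdot)\vf$ and $z=\mathcal T(\cdot)\mathscr L\vf$ are Schwartz-class solutions of \eqref{A} with the same datum $\mathscr L\vf$ (for $w$ this uses \eqref{dt}, the invariance $\mathscr L(\So)\subset\So$ together with \eqref{eccolina}, and the interchange $\partial_t\mathscr L u=\mathscr L\partial_t u$, which is legitimate by the joint smoothness of $u$ on $\Rm\times(0,\infty)$ or, as you argue, by differentiating the representation \eqref{eccola}), so the uniqueness assertion of Proposition \ref{P:erwin} forces $w\equiv z$. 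Since Proposition \ref{P:erwin}, \eqref{eccolina} and \eqref{dt} all precede the lemma and do not rely on it, there is no circularity. What your argument buys is brevity and conceptual clarity: it is the standard semigroup fact that a generator commutes with the semigroup on an invariant core, and it would work verbatim for any evolution possessing a uniqueness theorem on an $\mathscr L$-invariant class. What it costs is reliance on the uniqueness part of Proposition \ref{P:erwin} (which the paper justifies rather tersely) and on the derivative interchange, whereas the paper's computation is self-contained and, as a by-product, records the identity \eqref{id}, which is of independent interest. One small imprecision: the decay of $\widehat{u(\cdot,t)}$ in \eqref{eccola} is Schwartz decay inherited from $\hat\vf$ (the imaginary-Gaussian factor has modulus one), not ``Gaussian-type''; nothing in your argument needs more than rapid decay, so this does not affect the proof.
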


\begin{proof}
It suffices to show that 
\begin{equation}\label{Lft}
\widehat{\mathscr L \mathcal T(t) \vf} = \widehat{\mathcal T(t) \mathscr L  \vf}.
\end{equation}
Using the well-known formulas 
\begin{equation*}
\widehat{(\p_{x_j} f)}(\xi) = 2\pi i \xi_j \hat f(\xi),\ \ \ \ \ \ \ \ \widehat{(x_j f)}(\xi) = - \frac{1}{2\pi i} \frac{\p \hat f}{\p \xi_j}(\xi),
\end{equation*}
one verifies from \eqref{sL} that
\begin{equation}\label{L}
\widehat{\mathscr L f}(\xi) = - \left\{\sa B^\star \xi, \nabla \hat f(\xi)\da + \left(4 \pi^2 i \sa Q\xi,\xi\da + \tr B\right) \hat f(\xi)\right\}.
\end{equation}
Using \eqref{L} and \eqref{eccola}, one finds
\begin{align*}
& \widehat{\mathscr L \mathcal T(t) \vf}(\xi) = - \left\{\sa B^\star \xi, \nabla \widehat{\mathcal T(t) \vf}(\xi)\da + \left(4 \pi^2 i \sa Q\xi,\xi\da + \tr B\right) \widehat{\mathcal T(t) \vf}(\xi)\right\}
\\
& = - e^{-t \tr B} \left(4 \pi^2 i \sa Q\xi,\xi\da + \tr B\right) \hat \vf(e^{-tB^\star}\xi) \  e^{-4\pi^2 i  \sa Q(t) e^{-tB^\star}\xi,e^{-tB^\star}\xi\da}
\\
& - e^{-t \tr B} \sa B^\star \xi, \nabla \left(\hat \vf(e^{-tB^\star}\xi) \  e^{-4\pi^2 i  \sa Q(t) e^{-tB^\star}\xi,e^{-tB^\star}\xi\da}\right)\da.
\end{align*}
Since
\begin{align*}
& \nabla \left(\hat \vf(e^{-tB^\star}\xi) \  e^{-4\pi^2 i  \sa Q(t) e^{-tB^\star}\xi,e^{-tB^\star}\xi\da}\right) 
\\
& = \left\{e^{-tB} \nabla \hat \vf(e^{-tB^\star}\xi) -  8\pi^2 i \hat \vf(e^{-tB^\star}\xi) e^{-tB} Q(t) e^{-tB^\star}\xi\right\} e^{-4\pi^2 i  \sa Q(t) e^{-tB^\star}\xi,e^{-tB^\star}\xi\da},
\end{align*}
we obtain
\begin{align*}
\widehat{\mathscr L \mathcal T(t) \vf}(\xi) & = - e^{-t \tr B} \bigg\{\left(4 \pi^2 i \sa Q\xi,\xi\da + 2 \sa B^\star e^{-tB^\star} \xi, Q(t) e^{-tB^\star}\xi\da+ \tr B\right) \hat \vf(e^{-tB^\star}\xi) 
\\
& + \sa B^\star e^{-tB^\star} \xi, \nabla \hat \vf(e^{-tB^\star}\xi)\da\bigg\}  e^{-4\pi^2 i  \sa Q(t) e^{-tB^\star}\xi,e^{-tB^\star}\xi\da}.
\end{align*}
On the other hand, by \eqref{eccola} and \eqref{L} we also find
\begin{align}\label{commu}
 \widehat{\mathcal T(t) \mathscr L  \vf}(\xi) & = e^{-t \tr B} \widehat{\mathscr L  \vf}(e^{-tB^\star}\xi) \  e^{-4\pi^2 i  \sa Q(t) e^{-tB^\star}\xi,e^{-tB^\star}\xi\da}
\\
& = - e^{-t \tr B} \bigg\{\sa B^\star e^{-tB^\star}\xi, \nabla \hat \vf(e^{-tB^\star}\xi)\da 
\notag\\
& + \left(4 \pi^2 i \sa Qe^{-tB^\star}\xi,e^{-tB^\star}\xi\da + \tr B\right) \hat \vf(e^{-tB^\star}\xi)\bigg\} e^{-4\pi^2 i  \sa Q(t) e^{-tB^\star}\xi,e^{-tB^\star}\xi\da}.
\notag\end{align}
Observe now that the following identity holds
\begin{equation}\label{id} 
e^{-tB}Q e^{-tB^\star}= Q  - B e^{-tB} Q(t) e^{-tB^\star} - e^{-tB} Q(t) e^{-tB^\star} B^\star,\ \ \ \ \ \ \ \ t>0.
\end{equation}
This can be verified by noting that both sides have the same value $Q$ at $t = 0$, and they have the same derivative in $t$. 
The formula \eqref{id} guarantees that
\begin{align*}
\sa Qe^{-tB^\star}\xi,e^{-tB^\star}\xi\da & = \sa Q\xi,\xi\da - \sa B Q(t)e^{-tB^\star}\xi,e^{-tB^\star}\xi\da - \sa Q(t) B^\star e^{-tB^\star}\xi,e^{-tB^\star}\xi\da
\\
& = \sa Q\xi,\xi\da - 2 \sa B^\star e^{-tB^\star} \xi, Q(t) e^{-tB^\star}\xi\da.
\end{align*}
Substituting in the above expression of  $\widehat{\mathcal T(t) \mathscr L  \vf}(\xi)$, and comparing with that of $\widehat{\mathscr L \mathcal T(t) \vf}(\xi)$, we infer that \eqref{Lft} does hold.

\end{proof}

As previously mentioned, we next intend to provide a direct proof that $\{\mathcal T(t)\}_{t\ge 0}$ is a semigroup of linear operators on $\mathscr S(\Rm)$. This will be based on the following well-known elementary but crucial algebraic property of the matrix \eqref{Ds} which incidentally establishes an important monotonicity property (in the sense of quadratic forms) of $Q(t)$.

\begin{lemma}\label{L:Q}
Let $Q(t)$ be as in \eqref{Ds}. For every $s, t >0$ one has
\[
Q(t+s) = Q(t) + e^{tB} Q(s) e^{tB^\star}.
\]
As a consequence $t\to Q(t)$ is strictly monotonically increasing in the sense of quadratic forms. 
\end{lemma}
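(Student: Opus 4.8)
The plan is to obtain the additive identity directly from the integral definition \eqref{Ds}, by splitting the domain of integration at $t$ and performing a change of variables, and then to read off the strict monotonicity as an immediate consequence of the Hörmander condition, which is in force.

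First I would write
\[
Q(t+s) = \int_0^{t+s} e^{uB} Q e^{uB^\star}\, du = \int_0^{t} e^{uB} Q e^{uB^\star}\, du + \int_t^{t+s} e^{uB} Q e^{uB^\star}\, du = Q(t) + \int_t^{t+s} e^{uB} Q e^{uB^\star}\, du.
\]
In the remaining integral I would substitute $u = t+\sigma$, so that the domain becomes $\sigma\in(0,s)$, and then invoke the commutation relation $e^{(t+\sigma)B} = e^{tB}e^{\sigma B}$, which holds because $tB$ and $\sigma B$ commute, together with the analogous factorization $e^{(t+\sigma)B^\star} = e^{\sigma B^\star}e^{tB^\star}$. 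Since the factors $e^{tB}$ and $e^{tB^\star}$ are independent of $\sigma$, they can be pulled outside the integral, leaving
\[
\int_t^{t+s} e^{uB} Q e^{uB^\star}\, du = e^{tB}\left(\int_0^s e^{\sigma B} Q e^{\sigma B^\star}\, d\sigma\right) e^{tB^\star} = e^{tB} Q(s) e^{tB^\star},
\]
which is precisely the claimed identity.

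For the monotonicity statement, I would fix $t>0$, $s>0$, and an arbitrary $\xi\neq 0$, and use the identity just established to write $\langle Q(t+s)\xi,\xi\rangle = \langle Q(t)\xi,\xi\rangle + \langle e^{tB} Q(s) e^{tB^\star}\xi,\xi\rangle = \langle Q(t)\xi,\xi\rangle + \langle Q(s)\,e^{tB^\star}\xi,\,e^{tB^\star}\xi\rangle$. Since $e^{tB^\star}$ is invertible, $e^{tB^\star}\xi\neq 0$, and because the condition \eqref{Ds} is in force we have $Q(s)>0$, so the last term is strictly positive; hence $\langle Q(t+s)\xi,\xi\rangle > \langle Q(t)\xi,\xi\rangle$ for every $\xi\neq 0$, i.e. $Q(t+s)>Q(t)$ in the sense of quadratic forms.

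There is no serious obstacle here: the argument is essentially a one-line manipulation of the integral. The only points demanding care are the change of variables combined with the factorization of the matrix exponentials (one must keep $e^{tB}$ on the left and $e^{tB^\star}$ on the right), and the fact that \emph{strict} monotonicity — rather than the merely non-strict statement that would follow from $Q=Q^\star\ge 0$ alone — requires both the strict positivity $Q(s)>0$ furnished by \eqref{Ds} and the invertibility of $e^{tB^\star}$, so that the congruence $Q(s)\mapsto e^{tB} Q(s) e^{tB^\star}$ preserves positive definiteness.
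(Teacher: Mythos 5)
Your proof is correct and takes essentially the same route as the paper: splitting the integral defining $Q(t+s)$ in \eqref{Ds} at $\tau = t$, changing variables in the remaining piece, and factoring out $e^{tB}$ on the left and $e^{tB^\star}$ on the right. Your explicit verification of the strict monotonicity, via $Q(s)>0$ from \eqref{Ds} together with the invertibility of $e^{tB^\star}$, merely spells out what the paper treats as an immediate consequence of the identity.
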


\begin{proof}
From \eqref{Ds} we immediately obtain
\begin{align*}
Q(t+s) & = \int_0^{t+s} e^{\tau B}Q e^{\tau B^\star} d\tau
 = \int_0^{t} e^{\tau B}Q e^{\tau B^\star} d\tau + \int_t^{t+s} e^{\tau B}Q e^{\tau B^\star} d\tau
 \\
 & = Q(t) + \int_0^{s} e^{(\sigma + t) B}Q e^{(\sigma + t) B^\star} d\sigma = Q(t) + e^{tB} Q(s) e^{tB^\star}.
\end{align*}

\end{proof}

\begin{proposition}\label{P:sg}
For every $\vf\in \mathscr S(\Rm)$ and every $s, t >0$ one has
\begin{equation}\label{sgp}
\mathcal T(s+t)\vf = \mathcal T(s)(\mathcal T(t) \vf).
\end{equation}
\end{proposition}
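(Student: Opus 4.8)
The plan is to verify \eqref{sgp} on the Fourier transform side, where by Proposition \ref{P:erwin} the representation \eqref{eccola} turns each $\mathcal T(t)$ into multiplication by an explicit Gaussian phase composed with the linear substitution $\xi\mapsto e^{-tB^\star}\xi$. Since the Fourier transform is injective on $\So$, it suffices to establish $\widehat{\mathcal T(s+t)\vf} = \widehat{\mathcal T(s)(\mathcal T(t)\vf)}$.

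First I would apply \eqref{eccola} with $\psi = \mathcal T(t)\vf$ to write $\widehat{\mathcal T(s)\psi}(\xi) = e^{-s\tr B}\hat\psi(e^{-sB^\star}\xi)\,e^{-4\pi^2 i\sa Q(s)e^{-sB^\star}\xi, e^{-sB^\star}\xi\da}$, and then substitute the expression for $\hat\psi = \widehat{\mathcal T(t)\vf}$, again from \eqref{eccola}, evaluated at $\eta = e^{-sB^\star}\xi$. Using the commutativity of matrix exponentials $e^{-tB^\star}e^{-sB^\star} = e^{-(s+t)B^\star}$, the amplitude collapses to $e^{-(s+t)\tr B}\hat\vf(e^{-(s+t)B^\star}\xi)$, which already matches the amplitude of $\widehat{\mathcal T(s+t)\vf}(\xi)$ read off from \eqref{eccola}.

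It then remains to match the two accumulated quadratic phases. Writing $w = e^{-(s+t)B^\star}\xi$, the composite produces the phase $-4\pi^2 i\big(\sa Q(t)w,w\da + \sa Q(s)e^{-sB^\star}\xi, e^{-sB^\star}\xi\da\big)$, whereas $\widehat{\mathcal T(s+t)\vf}$ carries $-4\pi^2 i\sa Q(s+t)w, w\da$. Here the algebraic Lemma \ref{L:Q} enters decisively: from $Q(s+t) = Q(t) + e^{tB}Q(s)e^{tB^\star}$ we obtain $\sa Q(s+t)w,w\da = \sa Q(t)w,w\da + \sa Q(s)e^{tB^\star}w, e^{tB^\star}w\da$, and since $e^{tB^\star}w = e^{tB^\star}e^{-(s+t)B^\star}\xi = e^{-sB^\star}\xi$, the second term is exactly $\sa Q(s)e^{-sB^\star}\xi, e^{-sB^\star}\xi\da$. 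Thus the two phases coincide and \eqref{sgp} follows.

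I expect no serious obstacle here: the only thing to watch is the bookkeeping of the linear substitution inside the quadratic form and the correct orientation of Lemma \ref{L:Q}, namely that the conjugation $e^{tB}(\cdot)e^{tB^\star}$ acting on $Q(s)$ pairs precisely with the adjoint substitution $e^{tB^\star}$ acting on $w$. The cancellation of the trace prefactors and the composition of the exponential substitutions are immediate once the commutativity of the family $\{e^{\tau B^\star}\}$ is invoked.
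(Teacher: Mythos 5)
Your proof is correct and follows essentially the same route as the paper's: both verify the identity on the Fourier side via \eqref{eccola} applied twice, with Lemma \ref{L:Q} providing the decomposition $Q(s+t) = Q(t) + e^{tB}Q(s)e^{tB^\star}$ that matches the quadratic phases after the substitution $e^{tB^\star}e^{-(s+t)B^\star}\xi = e^{-sB^\star}\xi$. The only difference is that you expand $\widehat{\mathcal T(s)(\mathcal T(t)\vf)}$ and collapse it, while the paper starts from $\widehat{\mathcal T(s+t)\vf}$ and splits it; this is the same computation read in the opposite direction.
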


\begin{proof}
Denoting by $\mathscr F$ the Fourier transform, it suffices to show that
\begin{equation}\label{sgFT}
\mathscr F(\mathcal T(s+t)\vf) = \mathscr F(\mathcal T(s)(\mathcal T(t) \vf)).
\end{equation}
According to \eqref{eccola}, the left-hand side of \eqref{sgFT} is given by
\[
\mathscr F(\mathcal T(s+t)\vf)(\xi) = e^{-(s+t) \tr B} \hat \vf(e^{-(s+t)B^\star}\xi) \  e^{-4\pi^2 i  \sa Q(s+t) e^{-(s+t)B^\star}\xi,e^{-(s+t)B^\star}\xi\da}.
\]
From this formula and Lemma \ref{L:Q} we obtain
\begin{align*}
& \mathscr F(\mathcal T(s+t)\vf)(\xi) = e^{-(s+t) \tr B} \hat \vf(e^{-(s+t)B^\star}\xi) \  e^{-4\pi^2 i  \sa [Q(t) + e^{tB} Q(s) e^{tB^\star}] e^{-(s+t)B^\star}\xi,e^{-(s+t)B^\star}\xi\da}
\\
& = e^{-(s+t) \tr B} \hat \vf(e^{-tB^\star}(e^{-sB^\star}\xi)) \  e^{-4\pi^2 i  \sa Q(t)e^{-tB^\star}(e^{-sB^\star} \xi),e^{-tB^\star}(e^{-sB^\star} \xi)\da} e^{-4\pi^2 i  \sa Q(s) e^{-sB^\star}\xi,e^{-sB^\star}\xi\da}
\\
& = e^{-s \tr B} e^{-4\pi^2 i  \sa Q(s) e^{-sB^\star}\xi,e^{-sB^\star}\xi\da} \mathscr F(\mathcal T(t)\vf)(e^{-sB^\star} \xi)
\\
& = \mathscr F(\mathcal T(s)(\mathcal T(t)\vf)))(\xi),
\end{align*}
where in the last two equalities we have used twice \eqref{eccola}. This proves \eqref{sgFT}.

\end{proof}

We close this section by recording in Proposition \ref{P:one} below a basic property of the kernel in \eqref{f}. The integrals in \eqref{S1}, \eqref{S2} are generalised Riemann integrals.

\begin{proposition}\label{P:one}
For every $x\in \Rm$, and $t>0$ one has 
\begin{equation}\label{S1}
\int_{\Rm} \mathcal S(e^{tB} x,y,t) dy = 1,
\end{equation}
and
\begin{equation}\label{S2}
\int_{\Rm} \mathcal S(e^{tB}x,y,t) dx = e^{-t \tr B}.
\end{equation}
\end{proposition}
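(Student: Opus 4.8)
The plan is to reduce both identities to the single Fresnel-type integral computed in Lemma \ref{L:complexG}, after inserting the explicit form \eqref{Skolmo} of the kernel and performing an appropriate change of variables. Throughout, $Q(t)$ is real, symmetric and, by \eqref{Ds}, strictly positive, so that $A := \tfrac14 Q(t)^{-1}$ is a real symmetric positive definite matrix to which Lemma \ref{L:complexG} applies. Since $\det A = 4^{-m}(\det Q(t))^{-1}$, one has $\sqrt{\det A} = 2^{-m}(\det Q(t))^{-1/2}$, and therefore
\[
\int_{\Rm} e^{i \frac{\sa Q(t)^{-1} z,z\da}{4}}\, dz = e^{i \frac{m\pi}{4}}\,\pi^{m/2}\,2^m\,\sqrt{\det Q(t)},
\]
the single integral on which everything hinges.

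For \eqref{S1} I would substitute $z = y - e^{tB}x$, a translation with unit Jacobian, so that $\int_{\Rm}\mathcal S(e^{tB}x,y,t)\,dy$ equals the prefactor $(4\pi)^{-m/2} e^{-i m\pi/4}(\det Q(t))^{-1/2}$ times the integral displayed above. The phases $e^{-im\pi/4}$ and $e^{im\pi/4}$ cancel, the two factors $(\det Q(t))^{\pm 1/2}$ cancel, and the remaining constant simplifies as $(4\pi)^{-m/2}\pi^{m/2}2^m = 2^{-m}\cdot 2^m = 1$, which is \eqref{S1}.

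For \eqref{S2} the only new ingredient is the Jacobian of the linear map occurring in the argument. Integrating in $x$ and setting $w = e^{tB}x$ one has $dx = e^{-t\tr B}\,dw$, because $\det e^{tB} = e^{t\tr B}$; a further translation $u = y - w$ leaves the measure unchanged and produces exactly the same Fresnel integral as before. Thus $\int_{\Rm}\mathcal S(e^{tB}x,y,t)\,dx$ equals $e^{-t\tr B}$ times the right-hand side of \eqref{S1}, i.e. $e^{-t\tr B}$, as claimed.

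The only point demanding care is that these integrals are not absolutely convergent and must be read as generalised Riemann integrals; but this is precisely the sense in which Lemma \ref{L:complexG} is stated, so no genuine difficulty arises. The sole bookkeeping to watch is the matching of the oscillatory prefactor $e^{-i m\pi/4}$ against the phase $e^{i m\pi/4}$ produced by Lemma \ref{L:complexG}, together with the cancellation of the powers of $\det Q(t)$; both are immediate, so I expect this proof to be entirely routine once the change of variables is set up correctly.
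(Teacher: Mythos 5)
Your proof is correct and takes essentially the same route as the paper's: both identities are reduced, via affine changes of variables whose Jacobian $\det e^{tB}=e^{t\tr B}$ accounts for the factor in \eqref{S2}, to the Fresnel integral of Lemma \ref{L:complexG}. The only cosmetic difference is that you apply that lemma directly with $A=\tfrac14 Q(t)^{-1}$ after a translation, whereas the paper first normalizes the quadratic form through $z=Q(t)^{-1/2}(y-e^{tB}x)$ and then uses the lemma with $A=\tfrac14 I_m$.
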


\begin{proof}
We have from \eqref{f}
\begin{align*}
& \int_{\Rm} \mathcal S(e^{t B}x,y,t) dy = 
(4\pi
)^{-\frac{m}{2}} \frac{e^{-\frac{i m\pi}4}}{\sqrt{\det Q(t)}} \int_{\Rm}  e^{i \frac{|Q(t)^{-1/2}(y-e^{t B} x)|^2}{4}} dy.
\end{align*}
The change of variable $y\to z = Q(t)^{-1/2}(y-e^{t B} x)$, for which $dy = \sqrt{\det Q(t)} dz$, now gives
\begin{align*}
& \int_{\Rm} \mathcal S(e^{t B},y,t) dy = 
(4\pi)^{-\frac{m}{2}} e^{-\frac{i m\pi}4} \int_{\Rm}  e^{i \frac{|z|^2}{4}} dz  = 1,
\end{align*}
where in the last equality we have used \eqref{iG} above. This proves \eqref{S1}.

On the other hand, if we consider \eqref{S2}, then the change of variable $x\to z = Q(t)^{-1/2}(e^{t B} x-x)$, for which $dx = e^{-t \tr B} \sqrt{\det Q(t)} dz$, gives
\begin{align*}
& \int_{\Rm} \mathcal S(e^{t B}x,y,t) dx = (4\pi)^{-\frac{m}{2}} e^{-t \tr B} e^{-\frac{i m\pi}4} \int_{\Rm}  e^{i \frac{|z|^2}{4}} dz = e^{-t \tr B},
\end{align*}
which proves \eqref{S2}.

\end{proof}


\section{The extension of $\{\mathcal T(t)\}_{t\ge 0}$ to $L^2(\Rm)$}\label{S:sg}

The main result of this section is the following.

\begin{theorem}\label{T:sgL2}
The one-parameter family $\{\mathcal T(t)\}_{t\ge 0}$ of linear operators on $\mathscr S(\Rm)$ defined by \eqref{f} can be uniquely extended to a strongly continuous semigroup on $L^2(\Rm)$. Denoting by $\mathscr L_2$, $D_2$ respectively the infinitesimal generator and domain of such semigroup, so that 
\[
D_2 = \big\{f\in L^2(\Rm)\mid \mathscr L_2 f \overset{def}{=} \underset{t\to 0^+}{\lim}\ \frac{\mathcal T(t) f - f}{t}\ \emph{exists in }\ L^2(\Rm)\big\},
\]
then $\mathscr S(\Rm)$ is a core for $\mathscr L_2$, which restricted to such core is given by
\[
\mathscr L_2 \vf(x) = i \operatorname{tr}(Q \nabla^2 \vf)(x) +  \langle B x,\nabla \vf(x)\rangle = \mathscr L \vf(x).
\]
\end{theorem}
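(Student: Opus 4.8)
The plan is to use the Fourier-side representation \eqref{eccola} as the single engine driving all four assertions: boundedness with unique extension, the semigroup law, strong continuity, and the identification of the generator on the core. First I would establish the crucial $L^2$-bound. Since the phase factor $e^{-4\pi^2 i \sa Q(t) e^{-tB^\star}\xi, e^{-tB^\star}\xi\da}$ has modulus one, \eqref{eccola} gives $|\widehat{\mathcal T(t)\vf}(\xi)|^2 = e^{-2t\tr B}|\hat\vf(e^{-tB^\star}\xi)|^2$. Integrating in $\xi$, changing variables $\eta = e^{-tB^\star}\xi$ (whose Jacobian is $e^{t\tr B}$), and invoking Plancherel yields the exact identity $\norm{\mathcal T(t)\vf}_{L^2(\Rm)} = e^{-\frac{t\tr B}{2}}\norm{\vf}_{L^2(\Rm)}$ for every $\vf\in\So$. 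Because $\So$ is dense in $L^2(\Rm)$, each $\mathcal T(t)$ extends by continuity to a bounded operator on $L^2(\Rm)$ with operator norm exactly $e^{-\frac{t\tr B}{2}}$, and since a bounded operator defined on a dense subspace admits at most one bounded extension, this extension is unique.

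With the uniform bound in hand, the algebraic semigroup law \eqref{sgp} of Proposition \ref{P:sg}, valid on $\So$, passes to all of $L^2(\Rm)$ by density and continuity. For strong continuity I would run a standard three-$\varepsilon$ argument: the bound $\norm{\mathcal T(t)} \le e^{-\frac{t\tr B}{2}}$ is uniformly bounded for $t$ in any compact interval, the convergence $\mathcal T(t)\vf \to \vf$ as $t\to 0^+$ holds for $\vf\in\So$ (this was established at the end of the proof of Proposition \ref{P:erwin}), and density of $\So$ then upgrades this to $\mathcal T(t)f\to f$ for every $f\in L^2(\Rm)$. Combined with the semigroup law, this gives a strongly continuous semigroup.

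To identify the generator on $\So$, I would fix $\vf\in\So$ and write, using \eqref{dt},
\[
\frac{\mathcal T(t)\vf - \vf}{t} = \frac{1}{t}\int_0^t \mathscr L \mathcal T(s)\vf\, ds = \frac{1}{t}\int_0^t \mathcal T(s)\mathscr L\vf\, ds,
\]
where the second equality is the commutation Lemma \ref{L:mapTt}. Since $\mathscr L\vf\in\So$ and the semigroup is strongly continuous, the integrand $s\mapsto \mathcal T(s)\mathscr L\vf$ is $L^2$-continuous at $s=0$, so the time average converges in $L^2(\Rm)$ to $\mathcal T(0)\mathscr L\vf = \mathscr L\vf$ as $t\to 0^+$. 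Hence $\So\subset D_2$ and $\mathscr L_2$ agrees with $\mathscr L$ there. Finally, to conclude that $\So$ is a core I would invoke the standard fact that a subspace which is contained in the domain of the generator, dense in $L^2(\Rm)$, and invariant under the semigroup is automatically a core; the invariance $\mathcal T(t):\So\to\So$ is exactly \eqref{eccolina}. The only point demanding genuine care is the interchange of the $L^2$-limit with the time integral in the display above, namely confirming the $L^2$-continuity of $s\mapsto\mathcal T(s)\mathscr L\vf$ near $s=0$; everything else is a routine application of density together with the explicit norm identity.
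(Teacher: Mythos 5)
Your proposal is correct and follows essentially the same route as the paper: the exact norm identity $\norm{\mathcal T(t)\vf}_{L^2(\Rm)} = e^{-\frac{\tr B}{2}t}\norm{\vf}_{L^2(\Rm)}$ from \eqref{eccola} and Plancherel, density of $\So$ for the unique extension and the semigroup law, the identity $\mathcal T(t)\vf-\vf=\int_0^t \mathcal T(s)\mathscr L\vf\,ds$ (via \eqref{dt} and Lemma \ref{L:mapTt}) for identifying the generator, and \eqref{eccolina} together with \cite[Proposition 1.7, p.~39]{EN} for the core. The only divergence is minor: where you import the convergence $\mathcal T(t)\vf\to\vf$ from Proposition \ref{P:erwin} (which must then be read in the $L^2$ sense), the paper instead derives it quantitatively from that same integral identity, obtaining $\norm{\mathcal T(t)\vf-\vf}_{L^2(\Rm)}\le M\norm{\mathscr L\vf}_{L^2(\Rm)}\,t$, which also settles at once, via Minkowski's integral inequality, the interchange of limit and integral that you flag as the one delicate point.
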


\begin{proof}

We begin with the extension of $\mathcal T(t)$ to  $L^2(\Rm)$.  From \eqref{eccola} and Plancherel theorem, for $\vf\in \So$ we  find, after the change of variable $\eta = e^{-tB^\star}\xi$, which gives $d\xi = e^{t \tr B} d\eta$, 
\begin{equation}\label{finale}
||\mathcal T(t) \vf||_{L^2(\Rm)} = e^{-t \tr B} \left(\int_{\Rm} |\hat \vf(e^{-tB^\star}\xi)|^2 d\xi\right)^{1/2} = e^{-\frac{\tr B}2 t } ||\vf||_{L^2(\Rm)}.
\end{equation}
Since $\So$ is dense in $L^2(\Rm)$, $\mathcal T(t)$ uniquely extends to a bounded operator on $L^2(\Rm)$ with
\begin{equation}\label{iso}
||\mathcal T(t) \vf||_{L^2(\Rm)} = e^{-\frac{\tr B}2 t } ||\vf||_{L^2(\Rm)}.
\end{equation}
By \eqref{sgp} and the density of $\So$, we infer that $\{\mathcal T(t)\}_{t\ge 0}$ is a semigroup in $L^2(\Rm)$.

Let now $\vf\in \So$. Using \eqref{dt} and Lemma \ref{L:mapTt}, we obtain  
\begin{equation}\label{diffT}
\mathcal T(t) \vf(x) - \vf(x)  = \int_0^t \frac{d}{ds} \mathcal T(s) \vf(x) ds = \int_0^t \mathscr L \mathcal T(s) \vf(x) ds = \int_0^t \mathcal T(s)  \mathscr L \vf(x) ds.
\end{equation}
With $M = \max\{1,e^{\frac{|\tr B|}2}\}>0$, this gives for any $0\le t \le 1$,
\[
||\mathcal T(t) \vf - \vf||_{L^2(\Rm)} \le \int_0^t ||\mathcal T(s)  \mathscr L \vf||_{L^2(\Rm)} ds \le ||\mathscr L f||_{L^2(\Rm)} \int_0^t  e^{- s \frac{\operatorname{tr} B}2} ds \le M ||\mathscr L f||_{L^2(\Rm)}\ t,
\]
where in the second inequality we have used \eqref{iso}.
This proves 
\begin{equation}\label{sc1}
||\mathcal T(t) \vf- \vf||_{L^2(\Rm)} \le M ||\mathscr L \vf||_{L^2(\Rm)}\ t\ \underset{t \to 0^+}{\longrightarrow}\ 0,
\end{equation}
for any $\vf\in \So$. As a consequence, we also have for every $\vf\in L^2(\Rm)$ 
\begin{equation}\label{sc3}
\underset{t\to 0^+}{\lim}\ ||\mathcal T(t) \vf - \vf||_{L^2(\Rm)} = 0.
\end{equation}
This shows that $\{\mathcal T(t)\}_{t\ge 0}$ is strongly continuous in $L^2(\Rm)$. 

Next, we show that $\So\subset D_2$.
For any $\vf\in \So$ we obtain from \eqref{diffT}
\[
\frac{\mathcal T(t) \vf - \vf}t - \mathscr L \vf  = \frac 1t \int_0^t \left\{\mathcal T(s)  \mathscr L \vf - \mathscr L \vf\right\} ds.
\]
Since $\mathscr L \vf\in \So$, by Minkowski's integral inequality and  \eqref{sc1} we find
\[
\left\|\frac{\mathcal T(t) \vf - \vf}t - \mathscr L \vf \right\|_{L^2(\Rm)} \le \frac 1t \int_0^t ||\mathcal T(s)  \mathscr L \vf - \mathscr L \vf||_{L^2(\Rm)} ds \le \frac M2 ||\mathscr L^2 \vf||_{L^2(\Rm)} \ t.
\]
This shows  that $\So \subset D_2$, and that in fact the  linear operators $\mathscr L_2$  and $\mathscr L$ coincide on $\So$. Finally, the fact that $\So$ is a core for $\mathscr L_2$ follows from \eqref{eccolina} above, and from the fact that $\So$ is dense in $L^2(\Rm)$, see \cite[Proposition 1.7, p. 39]{EN}.

\end{proof}



\section{The sharp dispersive estimate}\label{S:main}

This section is devoted to proving the following sharp dispersive estimate.

\begin{theorem}\label{T:fest}
Let $\{\mathcal T(t)\}_{t\ge 0}$ be the family of linear operators on $\So$ defined by \eqref{f}. 
\begin{itemize}
\item[(i)] If for some $t>0$, and $1\le p, q\le \infty$, $\mathcal T(t) : L^p(\Rm)\to L^{q}(\Rm)$ continuously, then  $1\le p\le 2$ and $q = p'$. 
\item[(ii)] Given $1\le p \le 2$, and $t>0$, we can uniquely extend \eqref{f} to a bounded linear operator $\mathcal T(t) : L^p(\Rm)\to L^{p'}(\Rm)$ such that for any $\vf\in L^p(\Rm)$ one has
\begin{equation}\label{fe}
||\mathcal T(t) \vf||_{L^{p'}(\Rm)} \le (4\pi
)^{-\frac{m}{2}+\frac{m}{p'}} \left(\frac{p^{1/p}}{{p'}^{1/p'}}\right)^{\frac m2} \ \frac{e^{-\frac{\tr B}{p'} t}}{(\det Q(t))^{\frac 12 - \frac 1{p'}}} ||\vf||_{L^p(\Rm)}.
\end{equation}
\end{itemize}
\end{theorem}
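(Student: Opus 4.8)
The plan is to reduce $\mathcal T(t)$, up to a fixed invertible linear change of variables and a unimodular multiplier, to the ordinary Fourier transform, and then read off both statements from the mapping properties of $\mathscr F$. This reduction is precisely what makes the \emph{sharp} constant in \eqref{fe} accessible: plain Riesz--Thorin interpolation between the $L^2\to L^2$ isometry-type bound \eqref{iso} and the $L^1\to L^\infty$ bound (the latter following at once from $|\mathcal S(e^{tB}x,y,t)|=(4\pi)^{-m/2}(\det Q(t))^{-1/2}$, since the exponent in \eqref{Skolmo} is purely imaginary) reproduces every factor in \eqref{fe} \emph{except} the Babenko--Beckner factor $(p^{1/p}/{p'}^{1/p'})^{m/2}$, which it crudely replaces by $1$. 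To capture the sharp factor I will instead invoke the sharp Hausdorff--Young inequality directly.

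For part (ii), I would start from \eqref{f}--\eqref{Skolmo} and complete the square in the exponent. Writing $w=e^{tB}x$ and expanding $\sa Q(t)^{-1}(y-w),y-w\da$, the term quadratic in $w$ factors out as a unimodular function of $x$, the term quadratic in $y$ is absorbed into the datum by setting $g(y)=e^{i\sa Q(t)^{-1}y,y\da/4}\vf(y)$ (so that $|g|=|\vf|$ pointwise, whence $\|g\|_{L^p}=\|\vf\|_{L^p}$), and the cross term is exactly a Fourier kernel. This yields
\[
|\mathcal T(t)\vf(x)|=(4\pi)^{-\frac m2}(\det Q(t))^{-\frac12}\,\big|\widehat g(L_t x)\big|,\qquad L_t=\tfrac1{4\pi}Q(t)^{-1}e^{tB},
\]
with $L_t$ invertible. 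Changing variables by $L_t$ (Jacobian $|\det L_t|^{-1}=(4\pi)^m e^{-t\tr B}\det Q(t)$) expresses $\|\mathcal T(t)\vf\|_{L^{p'}}$ as an explicit constant times $\|\widehat g\|_{L^{p'}}$; applying the sharp Hausdorff--Young inequality $\|\widehat g\|_{L^{p'}}\le (p^{1/p}/{p'}^{1/p'})^{m/2}\|g\|_{L^p}$ and using $\|g\|_{L^p}=\|\vf\|_{L^p}$, one collects the powers of $4\pi$, of $\det Q(t)$ and of $e^{-t\tr B}$ and arrives precisely at \eqref{fe}. This is first proved for $\vf\in\So$; since $\So$ is dense in $L^p(\Rm)$ for every $1\le p\le 2$ (recall $p<\infty$), the estimate extends $\mathcal T(t)$ uniquely to a bounded operator $L^p\to L^{p'}$, the case $p=1$ included, since there the approximation takes place in the \emph{domain} $L^1$.

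For part (i) the same identity $|\mathcal T(t)\vf(x)|=c_t\,|\widehat g(L_t x)|$ shows that, because $\vf\mapsto g$ is a norm-preserving bijection of $L^p(\Rm)$ (multiplication by a unimodular smooth multiplier) and $L_t$ is a fixed linear isomorphism, the continuity of $\mathcal T(t):L^p\to L^q$ is \emph{equivalent} to that of $\mathscr F:L^p\to L^q$. The latter holds exactly for $1\le p\le 2$ and $q=p'$: the relation $q=p'$ is forced by testing against dilates $g(\lambda\,\cdot)$ and matching the resulting powers of $\lambda$ on the two sides, while $p\le2$ follows from the Gaussian chirp $g(y)=e^{-(a+ib)|y|^2}$, for which $\|g\|_{L^p}$ is independent of $b$ but $\|\widehat g\|_{L^{p'}}$ grows like $(a^2+b^2)^{\frac m{2p'}-\frac m4}$, hence blows up as $b\to\infty$ precisely when $p>2$.

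The main obstacle is the bookkeeping in part (ii): one must complete the square so as to split off a genuinely unimodular factor (a purely imaginary exponent), track the Jacobian of $L_t$ carefully, and verify that every constant matches \eqref{fe}, the decisive point being to route the argument through the sharp Hausdorff--Young inequality rather than interpolation so that the factor $(p^{1/p}/{p'}^{1/p'})^{m/2}$ survives. For part (i), the only real content beyond the reduction is the chirp computation certifying the failure of boundedness for $p>2$.
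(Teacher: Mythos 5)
Your proposal is correct and follows essentially the same route as the paper: the identity $|\mathcal T(t)\vf(x)|=(4\pi)^{-m/2}(\det Q(t))^{-1/2}|\widehat g(L_tx)|$ with $g=\vf\, e^{i\langle Q(t)^{-1}\cdot,\cdot\rangle/4}$ and $L_t=(4\pi)^{-1}Q(t)^{-1}e^{tB}$ is exactly the paper's ``Fourier transform in disguise'' formula, and both proofs then change variables (same Jacobian $(4\pi)^m e^{-t\operatorname{tr}B}\det Q(t)$) and invoke Beckner's sharp Hausdorff--Young inequality to get \eqref{fe}. The only difference is cosmetic: for part (i) the paper simply cites the known necessity of $q=p'$ and $p\le 2$ for boundedness of $\mathscr F$, whereas you prove it by dilation scaling and the Gaussian chirp computation, which is a valid (and standard) way to certify those facts.
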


At  the heart of our proof is the key observation, which rests on the H\"ormander condition \eqref{Ds}, that for $t>0$ the operator $\mathcal T(t)$ is the Fourier transform in disguise.
If one believes this claim, the proof is pretty much finished: Theorem \ref{T:fest} is hidden in the inequality of Hausdorff-Young. The rest of this section is devoted to unravel this fact.
 
Although it will not be used in the proof of Theorem \ref{T:fest}, it is of interest to remark separately the following basic consequence of Proposition \ref{P:erwin}.

\begin{proposition}\label{P:TtL1infty}
For every $t>0$ the linear operator $\mathcal T(t)$ maps $L^1(\Rm)$ to $L^\infty(\Rm)$. In fact, for $\vf\in \mathscr S(\Rm)$ one has
\begin{equation}\label{1infty} 
||\mathcal T(t) \vf||_{L^\infty(\Rm)} \le  \frac{(4\pi 
)^{-\frac{m}{2}}}{\sqrt{\det Q(t)}} ||\vf||_{L^1(\Rm)}.
\end{equation}
\end{proposition}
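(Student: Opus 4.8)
The plan is to read the bound \eqref{1infty} directly off the explicit kernel \eqref{Skolmo}, exploiting the fact that the H\"ormander condition \eqref{Ds} forces $Q(t)$ to be a real, symmetric, positive definite matrix. Consequently $Q(t)^{-1}$ is real and symmetric, so the quadratic form $\sa Q(t)^{-1}(y-x),y-x\da$ is real-valued, and $\sqrt{\det Q(t)}$ is a positive real number. First I would observe that the two complex factors appearing in \eqref{Skolmo}, namely $e^{-\frac{i m\pi}4}$ and $e^{i\frac{\sa Q(t)^{-1}(y-x),y-x\da}{4}}$, each have modulus one. Hence the kernel has \emph{constant} modulus
\[
|\mathcal S(x,y,t)| = \frac{(4\pi)^{-\frac m2}}{\sqrt{\det Q(t)}},
\]
independent of both $x$ and $y$.

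Next I would insert this into the representation \eqref{reallyfinal}. For $\vf\in \So$, since $|\mathcal S|$ is bounded and $\vf\in L^1(\Rm)$, the integral defining $\mathcal T(t)\vf$ converges absolutely and, passing the modulus inside the integral,
\[
|\mathcal T(t)\vf(x)| = \left|\int_{\Rm} \mathcal S(e^{tB}x,y,t)\vf(y)\,dy\right| \le \frac{(4\pi)^{-\frac m2}}{\sqrt{\det Q(t)}}\int_{\Rm} |\vf(y)|\,dy = \frac{(4\pi)^{-\frac m2}}{\sqrt{\det Q(t)}}\,||\vf||_{L^1(\Rm)}.
\]
Taking the supremum over $x\in \Rm$ yields \eqref{1infty} for every $\vf\in \So$.

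Finally, to obtain the mapping property $\mathcal T(t):L^1(\Rm)\to L^\infty(\Rm)$ for arbitrary $L^1$ data, I would invoke a density argument: $\So$ is dense in $L^1(\Rm)$, and \eqref{1infty} exhibits $\mathcal T(t)$ as a bounded linear operator from this dense subspace into the Banach space $L^\infty(\Rm)$, so it admits a unique bounded extension to all of $L^1(\Rm)$ satisfying the same norm bound. There is essentially no deep step in this argument; the only point requiring care is the verification that the modulus of the kernel is genuinely constant, and this is precisely where the positive definiteness of $Q(t)$, i.e. the H\"ormander hypothesis \eqref{Ds}, enters — it is what makes the phase $\sa Q(t)^{-1}(y-x),y-x\da$ real and keeps $\sqrt{\det Q(t)}$ a positive real normalizing factor.
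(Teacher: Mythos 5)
Your proposal is correct and coincides with the paper's own (essentially implicit) argument: the paper states Proposition \ref{P:TtL1infty} as a ``basic consequence of Proposition \ref{P:erwin}'', meaning exactly that the kernel $\mathcal S$ in \eqref{Skolmo} has constant modulus $(4\pi)^{-\frac m2}/\sqrt{\det Q(t)}$ (the phases being unimodular since \eqref{Ds} makes $Q(t)$ real, symmetric and positive definite), so the bound \eqref{1infty} follows by taking absolute values inside \eqref{reallyfinal}. Your additional density step extending the bound from $\So$ to all of $L^1(\Rm)$ is a harmless and correct completion of that same argument.
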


We turn to the
  
\begin{proof}[Proof of Theorem \ref{T:fest}]
With \eqref{iso} in Theorem \ref{T:sgL2} and \ref{P:TtL1infty} in hands, the first idea is to appeal to the Riesz-Thorin theorem of complex interpolation, see e.g. \cite[Theorem 1.3, p. 179]{SW}. Let $1\le p\le 2$, and write 
\[
\frac 1p = \frac{1-\theta}1 + \frac{\theta}2 = 1 - \frac{\theta}2\ \ \ \ \Longrightarrow\ \ \  \frac{\theta}2 = \frac{1}{p'}.
\] 
By \eqref{iso}, \eqref{1infty}, we infer by interpolation that
\[
\mathcal T(t) : L^p(\Rm) \to L^{p'}(\Rm),
\]
and moreover
\begin{align}\label{nonsharp}
||\mathcal T(t)||_{p\to p'} & \le ||\mathcal T(t)||_{1\to \infty}^{1-\theta} ||\mathcal T(t)||_{2\to 2}^{\theta} = \left\{\frac{(4\pi 
)^{-\frac{m}{2}}}{\sqrt{\det Q(t)}}\right\}^{1-\frac{2}{p'}} \left(e^{-\frac{\tr B}2 t }\right)^{\frac{2}{p'}} 
\\
& = (4\pi
)^{-\frac{m}{2}+\frac{m}{p'}} \frac{e^{-\frac{\tr B}{p'} t}}{\det Q(t)^{(\frac 12 - \frac 1{p'})}}.
\notag
\end{align}
We are done, except that since $\left(\frac{p^{1/p}}{{p'}^{1/p'}}\right)^{\frac m2}<1$, in \eqref{fe} we have claimed a constant strictly smaller than that in \eqref{nonsharp}. So, something is missing.

To understand this point, for $t>0$ we reformulate the representation \eqref{f} in the following fashion
\begin{equation}\label{Ttnice}
\mathcal T(t) \vf(x) = (4\pi
)^{-\frac{m}{2}}  \frac{e^{-\frac{i \pi m}4}}{\sqrt{\det Q(t)}}  \int_{\Rm} e^{i \frac{|Q(t)^{-1/2}y-Q(t)^{-1/2}e^{tB}x|^2}{4}} \vf(y) dy. 
\end{equation}
Expanding the imaginary Gaussian under the integral sign, we thus see that  \eqref{Ttnice} reveals the following remarkable aspect
\begin{equation}\label{disguise}
\mathcal T(t) \vf(x) = (4\pi
)^{-\frac{m}{2}}  \frac{e^{-\frac{i \pi m}4}}{\sqrt{\det Q(t)}} e^{i \frac{|Q(t)^{-1/2} e^{tB} x|^2}{4}}\mathscr F\left(\vf e^{i \frac{|Q(t)^{-1/2} \cdot|^2}{4}}\right)((4\pi)^{-1} Q(t)^{-1} e^{tB} x).
\end{equation}
Note that by the condition \eqref{Ds}, when $t>0$ the mapping
\begin{equation}\label{dif}
x\ \longrightarrow\ Q(t)^{-1} e^{tB} x
\end{equation}
is a global diffeomorphism of $\Rm$ onto itself. Observe also that for every $t>0$ the mapping 
\[
\vf(x)\ \longrightarrow\ \vf(x) e^{i \frac{|Q(t)^{-1/2} x |^2}{4}}
\]
is a bijection on $\mathscr S(\Rm)$ (and it is also unitary on $L^p(\Rm)$ for every $1\le p\le \infty$). We thus find from \eqref{disguise}
\begin{equation*}
\frac{(4\pi
)^{-\frac{m}{2}}}{\sqrt{\det Q(t)}}\left|\mathscr F\vf(x)\right| = \left|\mathcal T(t)\left(\vf e^{-i \frac{|Q(t)^{-1/2} \cdot|^2}{4}}\right)(4\pi Q(t) e^{-tB} x)\right|.
\end{equation*}
This shows that, in modulus, and up to conjugation with imaginary Gaussians and composition with the mapping \eqref{dif}, for every $t>0$ the linear mapping $\mathcal T(t):\So\to \So$ is in disguise the Fourier transform $\mathscr F$. Therefore, for every $t>0$ the mapping properties of $\mathcal T(t)$ are decided by the  theorem of Hausdorff-Young. Since the latter states that if $\mathscr F : L^p(\Rm)\to L^q(\Rm)$, then we must necessarily have $q = p'$, and $1\le p \le 2$, we conclude that the same holds for every linear operator $\mathcal T(t)$. This proves part (i).  

For part (ii) we recall that in his celebrated paper \cite{Be} Beckner  computed the sharp constant in the Hausdorff-Young inequality, and proved that 
\begin{equation}\label{HY}
||\mathscr F \vf||_{L^{p'}(\Rm)} \le \left(\frac{p^{1/p}}{{p'}^{1/p'}}\right)^{\frac m2} ||\vf||_{L^{p}(\Rm)}.
\end{equation}
He also showed that equality is attained in \eqref{HY} if and only if $\vf$ is a Gaussian.
Going back to \eqref{disguise}, we thus have for $t>0$ and any $\vf\in \mathscr S(\Rm)$
\begin{align*}
||\mathcal T(t) \vf||_{L^{p'}(\Rm)} = \frac{(4\pi
)^{-\frac{m}{2}}}{\sqrt{\det Q(t)}}\left(\int_{\Rm} \left|\mathscr F\left(\vf e^{i \frac{|Q(t)^{-1/2} \cdot|^2}{4}}\right)((4\pi)^{-1} Q(t)^{-1} e^{tB} x)\right|^{p'} dx\right)^{1/p'}.
\end{align*}
The change of variable $y = (4\pi)^{-1} Q(t)^{-1} e^{tB} x$, which gives 
\[
dx = (4\pi)^m e^{-t \tr B} \det Q(t)\ d y,
\]
yields
\begin{align*}
||\mathcal T(t) \vf||_{L^{p'}(\Rm)} & = \frac{(4\pi
)^{-\frac{m}{2}+\frac{m}{p'}} e^{-t {\frac{\tr B}{p'}}}}{(\det Q(t))^{\frac 12 - \frac 1{p'}}}\left(\int_{\Rm} \left|\mathscr F\left(\vf e^{i \frac{|Q(t)^{-1/2} \cdot|^2}{4}}\right)(y)\right|^{p'} dy\right)^{1/p'}
\\
& \le \frac{\left(\frac{p^{1/p}}{{p'}^{1/p'}}\right)^{\frac m2}(4\pi
)^{-\frac{m}{2}+\frac{m}{p'}} e^{-t {\frac{\tr B}{p'}}}}{(\det Q(t))^{\frac 12 - \frac 1{p'}}}\left(\int_{\Rm} \left|\vf(y) e^{i \frac{|Q(t)^{-1/2} y|^2}{4}}\right|^{p} dy\right)^{1/p}
\notag
\\
& = \frac{\left(\frac{p^{1/p}}{{p'}^{1/p'}}\right)^{\frac m2} (4\pi
)^{-\frac{m}{2}+\frac{m}{p'}} e^{-t {\frac{\tr B}{p'}}}}{(\det Q(t))^{\frac 12 - \frac 1{p'}}}\left(\int_{\Rm} |\vf(y)|^{p} dy\right)^{1/p},
\notag
\end{align*}
where in the second line we have used \eqref{HY}. This proves \eqref{fe}.
\end{proof}

Finally, it might be of interest to compare the dispersive estimate \eqref{fe0} with the ultracontractive inequality for the (positivity-preserving) semigroup $\{\mathcal P(t)\}_{t\ge 0}$ associated with the Cauchy problem for \eqref{A0}. For the latter, one has the following result (see e.g. \cite[Proposition 2.3]{G&liarNA}): for every $1\leq p< \infty$ and $q\geq p$, one has $\mathcal P_t: L^p(\Rm) \to L^q(\Rm)$ for any $t>0$, with
\begin{equation}\label{ultra}
||\mathcal P_t f||_{L^q(\Rm)} \le C(m,p,q) \frac{e^{- \frac{\operatorname{tr} B}{q} t}}{(\det Q(t))^{\frac 12(\frac{1}{p}-\frac{1}{q})}} ||f||_{L^p(\Rm)}.
\end{equation}
If in \eqref{ultra} we take in particular $1\le p\le 2$, and $q = p'$, we obtain 
\begin{equation*}
||\mathcal P_t f||_{L^{p'}(\Rm)} \le C(m,p) \frac{e^{-  \frac{\operatorname{tr} B}{p'} t}}{(\det Q(t))^{\frac{1}{2}-\frac{1}{p'}}} ||f||_{L^{p}(\Rm)},
\end{equation*}
which at the formal level is identical to \eqref{fe0}. 


\section{Hardy uncertainty principle}

A classical uncertainty theorem by Hardy states the following, see \cite{Ha}, \cite{Veluma} and also \cite[Theorem 2]{CEKPV} for a real-variable proof. Let $f:\Rm\to \mathbb C$ be a measurable function for which there exist constants $C, a, b>0$ such that for every $x\in \Rm$
\begin{equation}\label{hardy}
|f(x)|\le C e^{- a |x|^2},\ \ \ \ \ \ \ \ |\hat f(x)| \le C e^{- b |x|^2}.
\end{equation}
If $a b > \pi^2$, then $f\equiv 0$ in $\Rm$. If instead $a b = \pi^2$, then $f(x) = c e^{- a |x|^2}$. 

It is well-known, see \cite{EKPV}, \cite{Cha}, \cite{CEKPV}, that the theorem of Hardy is equivalent to a uniqueness result for global solutions of the Cauchy problem for the free Schr\"odinger equation.
In what follows we provide yet another instance of this aspect. It will be convenient to introduce the following notation. For $t>0$ we consider the linear mapping $K(t) : \Rm\to \Rm$ defined by the equation 
\begin{equation}\label{Kt}
K(t) x = 4\pi t^{-1} Q(t) e^{-tB} x.
\end{equation}
As we have noted, since \eqref{Ds} implies that
\[
\det K(t) = (4\pi)^m t^{-m} e^{-t \tr B} \det Q(t)\ >\ 0,
\] 
we have $K(t)\in G\ell(\R,m)$ for every $t>0$. 

\begin{theorem}\label{T:hardy}
Let $\vf\in L^2(\Rm)$ and $f(x,t)= \mathcal T(t) \vf(x)$. Suppose that for some $A, a, b, s>0$ and for a.e. $x\in \Rm$
\begin{equation}\label{upperG}
|f(x,0)| = |\vf(x)| \le A e^{-a |x|^2},\ \ \ \ \ |f(x,s)| \le A e^{-b |K(s)^{-1} x|^2}.
\end{equation}
If $s > \frac{\pi}{\sqrt{ab}}$,
then $f(x,t) \equiv 0$ in $\Rm\times (0,\infty)$.
\end{theorem}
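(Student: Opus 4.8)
The aim is to reduce the claimed uncertainty statement for the Schr\"odinger-with-friction evolution to the classical theorem of Hardy recalled above. The basic strategy is to recognise, via the key representation \eqref{eccola}, that $\widehat{f(\cdot,s)}$ is essentially a Gaussian-decaying transform of $\widehat{\vf}$, so that the two decay hypotheses in \eqref{upperG} translate into Gaussian decay of \emph{a single function and its Fourier transform}. Concretely, I would first fix the time $s>0$ at which the second hypothesis is assumed, set $g = \vf$, and examine the function $f(\cdot,s) = \mathcal T(s)\vf$. The first inequality in \eqref{upperG} already gives $|g(x)| = |\vf(x)| \le A e^{-a|x|^2}$, which is Gaussian decay for $g$ itself.

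\textbf{The core computation.} The second inequality controls $|f(x,s)|$, but \eqref{disguise} expresses $\mathcal T(s)\vf(x)$ as an imaginary Gaussian times $\mathscr F\big(\vf\, e^{i|Q(s)^{-1/2}\cdot|^2/4}\big)$ evaluated at $(4\pi)^{-1}Q(s)^{-1}e^{sB}x$. Taking moduli, the imaginary-Gaussian prefactors have modulus one, so $|f(x,s)|$ equals a constant times $\big|\mathscr F\big(\vf\, e^{i|Q(s)^{-1/2}\cdot|^2/4}\big)\big|$ evaluated at the argument $(4\pi)^{-1}Q(s)^{-1}e^{sB}x$. The decay hypothesis $|f(x,s)|\le A e^{-b|K(s)^{-1}x|^2}$, with $K(s)x = 4\pi s^{-1}Q(s)e^{-sB}x$, is designed precisely so that after the substitution $\xi = (4\pi)^{-1}Q(s)^{-1}e^{sB}x$ the exponent $|K(s)^{-1}x|^2$ becomes a clean multiple of $|\xi|^2$. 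I would carry out this change of variables carefully: one checks that $K(s)^{-1}x$ equals a fixed positive multiple (namely $s$) of $\xi$, so that the second hypothesis becomes Gaussian decay $\big|\mathscr F(h)(\xi)\big| \le A' e^{-b s^2|\xi|^2}$ for the modified function $h = \vf\, e^{i|Q(s)^{-1/2}\cdot|^2/4}$.

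\textbf{Applying Hardy.} Now I have $|h(x)| = |\vf(x)| \le A e^{-a|x|^2}$ (the imaginary Gaussian has modulus one) together with $|\widehat h(\xi)| \le A' e^{-bs^2|\xi|^2}$. These are exactly the two hypotheses \eqref{hardy} of Hardy's theorem, with the two decay constants $a$ and $bs^2$. Hardy's theorem then forces $h\equiv 0$ whenever the product of these constants exceeds $\pi^2$, i.e. whenever $a\cdot bs^2 > \pi^2$, equivalently $s > \pi/\sqrt{ab}$ — precisely the stated threshold. Since $h$ differs from $\vf$ only by a unimodular factor, $h\equiv 0$ gives $\vf\equiv 0$, whence by \eqref{f} (or directly by linearity of $\mathcal T(t)$) we conclude $f(\cdot,t) = \mathcal T(t)\vf \equiv 0$ for all $t$, which is the assertion.

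\textbf{Main obstacle.} The genuinely delicate point is the bookkeeping in the change of variables: one must verify that the matrix $K(s)$ in \eqref{Kt} is exactly the one making $|K(s)^{-1}x|^2$ collapse to $s^2|\xi|^2$ under $\xi = (4\pi)^{-1}Q(s)^{-1}e^{sB}x$, and that the resulting pointwise Gaussian bound on $|\widehat h|$ is \emph{uniform} (valid for a.e. $\xi$, with a genuine constant), since Hardy's theorem requires an honest pointwise majorant rather than merely an $L^2$ statement. A secondary subtlety is that the scalar version of Hardy recalled in the excerpt is stated for isotropic Gaussians $e^{-a|x|^2}$; the definition of $K(t)$ is tailored exactly so that after the substitution the decay is isotropic in $\xi$, so no anisotropic generalisation of Hardy is needed — but one should confirm this reduction rather than assume it. Once these two points are checked, the threshold $s>\pi/\sqrt{ab}$ and the conclusion $f\equiv 0$ follow immediately from the classical statement.
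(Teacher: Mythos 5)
Your proposal is correct and follows essentially the same route as the paper: both conjugate $\vf$ by the unimodular factor $e^{i|Q(s)^{-1/2}\cdot|^2/4}$ (your $h$ is the paper's $\psi_s$), use \eqref{disguise} together with the definition \eqref{Kt} of $K(s)$ to convert the second hypothesis in \eqref{upperG} into the pointwise bound $|\widehat{h}(\xi)|\le A' e^{-bs^2|\xi|^2}$, and then apply Hardy's theorem with the pair of constants $a$ and $bs^2$, so that $abs^2>\pi^2$, i.e. $s>\pi/\sqrt{ab}$, forces $h\equiv 0$, hence $\vf\equiv 0$ and $f\equiv 0$. Your bookkeeping (in particular $K(s)^{-1}x = s\xi$ under $\xi=(4\pi)^{-1}Q(s)^{-1}e^{sB}x$, and the preservation of a genuine pointwise majorant) matches the paper's computation exactly.
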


\begin{proof}
For any $t>0$ consider the function 
\[
\psi_t(y) = \vf(y)\ e^{i \frac{|Q(t)^{-1/2} y|^2}{4}}.
\] 
Using the mapping \eqref{Kt}, we can rewrite \eqref{disguise} in the following way 
\begin{equation}\label{redisguise}
\mathcal T(t) \vf(x) = (4\pi
)^{-\frac{m}{2}}  \frac{e^{-\frac{i \pi m}4}}{\sqrt{\det Q(t)}} e^{i \frac{|Q(t)^{-1/2} e^{tB} x|^2}{4}}\widehat{\psi_t}(t^{-1} K(t)^{-1} x).
\end{equation}
When $t = s$, the identity \eqref{redisguise} and the second inequality in the hypothesis \eqref{upperG}, imply
\begin{align}\label{beauty}
|\widehat{\psi_s}(s^{-1} y)| & =  (4\pi
)^{\frac{m}{2}} \sqrt{\det Q(s)}\ |\mathcal T(s) \vf(K(s) y)| = (4\pi
)^{\frac{m}{2}} \sqrt{\det Q(s)}\ |f(K(s) y,s)|
\\
& \le A  (4\pi
)^{\frac{m}{2}} \sqrt{\det Q(s)}\ e^{- b |y|^2} = C e^{-b |y|^2},\ \ \ \ \ \forall y\in \Rm,
\notag
\end{align}
with $C = A  (4\pi
)^{\frac{m}{2}} \sqrt{\det Q(s)}$. On the other hand, the above definition of $\psi_s$, and \eqref{upperG}, give
\begin{equation}\label{psis}
|\psi_s(x)| \le A e^{- a |x|^2},\ \ \ \ \ \forall x\in \Rm. 
\end{equation}
From Hardy's theorem and \eqref{beauty}, \eqref{psis}, we infer that if $s > \frac{\pi}{\sqrt{ab}}$ holds, then
\[
\psi_s \equiv 0\ \Longrightarrow\ \phi \equiv 0.
\]
We conclude that $f\equiv 0$ in $\Rm\times [0,\infty)$. 
 
\end{proof}


\bibliographystyle{amsplain}

\end{document}